\numberwithin{equation}{section}
\numberwithin{figure}{section}
\numberwithin{table}{section}
\numberwithin{theorem}{section}
\newcommand{\mcT}{\mathcal{T}}
\newcommand{\mcF}{\mathcal{F}}
\newcommand{\R}{\mathbb{R}}
\newcommand{\foralls}{\forall\,}
\newcommand{\dx}{\,\mathrm{d}x}
\newcommand{\ds}{\,\mathrm{d}s}
\newcommand{\mesh}{\mathcal{T}}
\newcommand{\pO}{\partial \Omega}
\newcommand{\bfI}{\boldsymbol{I}}
\newcommand{\tn}{|\mspace{-1mu}|\mspace{-1mu}|} 
\newcommand{\deltahone}{{{\delta}_h'}}
\begin{document}
\title{$\bf L^2$-Error Estimates for Finite Element Approximations
of Boundary Fluxes}

\author{
  Mats G.\ Larson\thanks{Department of Mathematics, Ume{\aa}
  University, SE-901 97 Ume{\aa}, Sweden. email: mats.larson@math.umu.se}
  \and
  Andr\'e Massing\thanks{Simula Research Laboratory, P.O. Box 134, 1325
  Lysaker, Norway. email: massing@simula.no}
}


\maketitle

\begin{abstract}
\noindent
We prove quasi-optimal a priori error estimates for finite element
approximations of boundary normal fluxes in the $L^2$-norm. Our
results are valid for a variety of different schemes for weakly
enforcing Dirichlet boundary conditions including Nitsche's method,
and Lagrange multiplier methods. 
The proof is based on an error representation formula that is derived
by using a discrete dual problem with $L^2$-Dirichlet boundary data
and combines a weighted discrete stability estimate for the dual problem
with anisotropic interpolation estimates in the boundary zone.
\end{abstract}

\begin{keywords}
  Boundary flux, $L^2$-error estimates, discete dual
  problem,  Nitsche's method, Lagrange multipliers
\end{keywords}

\begin{AMS}
65N12, 65N15, 65N30
\end{AMS}

\section{Introduction}
The normal flux at the boundary or on interior
interfaces is in general of great interest in applications. Examples
include surface stresses in mechanics, heat transfer through interfaces,
and transport of fluid in Darcy flow.

Recently Melenk and Wohlmuth \cite{MelenkWohlmuth2012} has shown quasi-optimal
order estimates for fluxes in a mortar setting where continuity and
boundary conditions is enforced using a mortaring space of Lagrange
multipliers. More precisely, they shown that the $L^2$-norm of the
error in the normal flux is of order $|\ln h| h$ for piecewise linear
polynomials and of order $h^k$ for piecewise polynomials of order $k$.
In contrast only $h^{k-1/2}$ will be obtained if a trace
inequality is used in combination with standard convergence theory
for saddle point problems, see \cite{Bramble1981}.

In this contribution we give an alternative proof of this result and,
focusing on the case $k=1$, we also consider a wider variety of
methods for weakly enforcing Dirchlet boundary conditions,
including Nitsche's method and stable and stabilized Lagrange
multipliers methods.

Our proof is based on an error representation formula where the error
in the normal flux is represented in terms of the interpolation error
and the solution to a discrete dual problem with $L^2$-Dirichlet
boundary data. Key to the error estimate is a stability estimate for
the discrete dual problem in terms of the $L^2$-norm of the
Dirichlet data. In the continuous case such an estimate is known,
see Chabrowsky \cite{Chabrowski1982} and \cite{Chabrowski1991}, and provides control
of the gradient weighted with the distance to the boundary as well
as a max-norm control of the $L^2$-norms of the solution on manifolds
close to and parallel with the boundary. 
We prove a corresponding stability estimate for our discrete dual
problem. In contrast to the approach by \citet{MelenkWohlmuth2012},
we avoid using a Besov space framework.

Our error representation formula is related to the one derived in the
\citet{GilesLarsonLevenstamEtAl1997,CareyChowSeager1985,
PehlivanovLazarovCareyEtAl1992} where various estimates
for functionals of the normal flux are derived and
\cite{EstepTavenerWildey2010} where adaptive methods based on dual
problems targeting the flux in a coupled problem are developed.
Note however that in our setting where we seek an a priori estimate,
we employ a discrete dual problem while in the a posteriori setting,
the corresponding continuous dual problem is used. Here we also
establish the stability of the discrete dual problem using analytical
techniques while in the duality based a posteriori error estimates,
stability is often estimated using computational techniques or a known
analytical stability result.

The remainder of this work is organized as follows.  In
Section~\ref{sec:model-problem} we introduce the model problem and its
variational formulations we will consider throughout this work.
Corresponding finite element discretizations are presented in
Section~\ref{sec:fem-discretization} together with the definition of
the discrete boundary fluxes.  In Section~\ref{sec:stability-bounds}
we prove stability bounds for the discrete dual problem and provide
interpolation estimates of the solution close to the boundary.
Combining these results allows us to prove $L^2$-error estimates for
the boundary flux approximations in Section~\ref{sec:error-estimates}.
In Section~\ref{sec:numerical-results}, we finally present numerical
results illustrating the theoretical findings.

\section{Model Problem}
\label{sec:model-problem}
Let $\Omega$ be a polygonal domain in ${\R}^d$, $d = 2,3$ with boundary
$\partial \Omega$.
We consider the elliptic model problem: find $u: \Omega \rightarrow {\R}$
such that
\begin{align}
-\Delta u &= f \quad \text{in $\Omega$}
\label{eq:model-problem-strong-pde}
\\
u&=g\quad \text{on $\partial \Omega$}
\label{eq:model-problem-strong-bc}
\end{align}
where $f$ and $g$ are given data.  Then the
boundary flux $\sigma_n$ for the solution $u$ is defined by
\begin{equation}
  \sigma_n = n \cdot \nabla u
  \label{eq:sigman}
\end{equation}
where $n$ is the outwards pointing unit normal to $\partial \Omega$.

In what follows, we consider the standard Sobolev spaces $H^s(U)$, $s
\geqslant 0$ on some domain $U$, endowed the the usual norms $\| \cdot
\|_{s,U}$ and semi-norms $ | \cdot |_{s,U}$.
More generally, the space
$W^{s,p}(U)$ is defined as the Sobolev space consisting of all
functions having $p$-integrable derivates up to order $s$ on $U$.
As usual, $H^0(\Omega) =
L^2(\Omega)$ and $H^{-1/2}(\partial \Omega)$ denotes the dual space of
$H^{1/2}(\partial \Omega)$.
Moreover, for a function $g \in H^{1/2}(\partial \Omega)$
we introduce the notation
$H^1_g(\Omega) = \{ v \in H^1(\Omega) : v |_{\partial \Omega} = g\}$.
The scalar product in $H^s(U)$ is written as $(\cdot,
\cdot)_{s,U}$ and
to simplify the notation, we generally omit the domain designation
if $U = \Omega$ and the Sobolev index if $s = 0$ in both norm and
scalar product expressions.
Using this notation, a weak formulation of the elliptic boundary value
problem~\eqref{eq:model-problem-strong-pde}--\eqref{eq:model-problem-strong-bc} is to seek
$u \in H^1_g(\Omega)$
 such that
\begin{align}
  a(u,v) = l(v) \quad \foralls v \in H^1_0(\Omega)
  \label{eq:model-problem-weak-form}
\end{align}
where
\begin{align}
  a(u,v) &= (\nabla u, \nabla v)
  \\
  l(v) &= (f,v)
\end{align}
Here, the boundary condition $u|_{\partial \Omega} = g$ is already
incorporated into the trial space $H^1_g(\Omega)$.
Alternatively, the boundary
condition~\eqref{eq:model-problem-strong-bc} can be enforced weakly by
using a Lagrange multiplier approach. Introducing the bilinear form
\begin{equation}
  b(\mu, v) = (\mu,v)_{\partial \Omega}
\end{equation}
the resulting variational
formulation is given by the saddle point problem: find
$(u,\lambda) \in H^1(\Omega) \times H^{-1/2}(\partial \Omega)$ such that
\begin{align}
  a(u,v) + b(\lambda,v) + b(\mu, u) = l(v)  +  b(\mu, g)
  \quad \foralls (v,\mu) \in H^1(\Omega) \times H^{-1/2}(\partial
  \Omega)
  \label{eq:model-problem-lm-form}
\end{align}
For brevity, we might denote the left-hand side 
by $A(u,\lambda; v, \mu)$ and the right-hand side $L(v,\mu)$.
It is well-known \cite{Babuska1973,Brezzi1974,Pitkaeranta1979,Stenberg1995}, that
the saddle point problem~\eqref{eq:model-problem-lm-form} satisfies
the  Babu\v{s}ka-Brezzi condition, in particular
\begin{equation}
  \sup_{v \in H^1(\Omega) \setminus \{0\}} \dfrac{b(\lambda,v)}{\| v \|_{1,\Omega}} \gtrsim
  \| \mu \|_{-1/2,\partial \Omega}
  \quad \foralls \mu \in H^{-1/2}(\partial \Omega)
  \label{eq:inf-sup-cont}
\end{equation}
Consequently, problem~\eqref{eq:model-problem-lm-form} possesses a
unique solution $(u,\lambda)$, where $u$
solves~\eqref{eq:model-problem-strong-pde}--\eqref{eq:model-problem-strong-bc}
in a weak sense and the Lagrange multiplier $\lambda$
represents the negative of the normal flux of $u$, i.e. $\lambda = -
\sigma_n$.

\section{Finite Element Discretizations of the Model Problem}
\label{sec:fem-discretization}
In this section, we introduce the finite element
discretizations of
problem~\eqref{eq:model-problem-strong-pde}--\eqref{eq:model-problem-strong-bc}
we will consider throughout this work.
The discretizations
are defined on a quasi-uniform partition $\mcT$ of $\Omega$ into shape
regular triangles in two or tetrahedra in three space dimensions with
mesh parameter $h$. For a given mesh $\mcT$, let the associated finite
element space of piecewise linear continuous functions be denoted by
$V_h$. We do not assume $V_h \subset H^1_g(\Omega)$
and consequently, the discretizations to be considered
will enforce the boundary condition~\eqref{eq:model-problem-strong-bc}
weakly. For each discretization we will define a discrete counterpart
$\Sigma_n$ of the boundary flux~\eqref{eq:sigman}.

\subsection{Nitsche's Method}
The \citet{Nitsche1971}  finite element method takes the form: find $u_h \in V_h$ such
that
\begin{equation}
  a_h(u_h,v) = l_h(v) \quad \forall v \in V_h
  \label{eq:nitsche-form}
\end{equation}
where the forms are defined by
\begin{align}\label{eq:ah}
  a_h(u,v) &= a(u, v) - (n \cdot \nabla u,v)_{\partial \Omega}
  - (n\cdot \nabla u,v)_{\partial \Omega} + \beta h^{-1}
  (u,v)_{\partial \Omega}
  \\
  \label{eq:lh}
  l_h(v) &= l(v) - (g,n \cdot \nabla v)_{\partial \Omega} + \beta
  h^{-1} (g,v)_{\partial\Omega}
\end{align}
with $\beta$ being a positive parameter. Introducing the energy norm
\begin{equation}
  \tn v \tn^2
  = \| \nabla v \|_\Omega^2 + h \| n \cdot \nabla v \|_{\partial
  \Omega}^2 + h^{-1} \| v \|_{\partial \Omega}^2
\end{equation}\label{eq:coercivity}
we recall that the bilinear form $a_h(\cdot,\cdot)$ is continuous
\begin{align}
  a_h(u,v) \lesssim \tn u \tn \; \tn v \tn
\end{align}
and that if the stabilization parameter $\beta$ is large enough,
a coercivity condition
\begin{align}
  \tn v \tn^2 \lesssim  a_h(v,v) \quad \forall v \in V_h
  \label{eq:nitsche-coerciv}
\end{align}
is satisfied, yielding the standard error estimate
\begin{equation}
  \tn u - u_h \tn \lesssim h  \| u \|_{2}
  \label{eq:nitsche-a-priori-est}
\end{equation}
Here and throughout, we use the notation $ a \lesssim b$ for $ a
\leqslant C b$ for some generic constant~$C$ which vary with the
context but is always independent of the mesh size $h$.
For proofs of~\eqref{eq:nitsche-coerciv}
and~\eqref{eq:nitsche-a-priori-est}, we refer to \cite{Nitsche1971,
Hansbo2005}.
To Nitsche's method~\eqref{eq:nitsche-form}, we associate the discrete
variational normal flux of the form
\begin{equation}\label{eq:Sigman}
  (\Sigma_n, v)_{\partial \Omega} = (\nabla u_h, \nabla v )_{\Omega}
  - (u_h - g,n\cdot \nabla v)_{\partial \Omega} - (f,v)_{\Omega}
  \quad \forall v \in V_h
\end{equation}
where $\Sigma_n$ is the so-called Nitsche flux
\begin{equation}
  \label{eq:flux-definition-nitsche-case}
  \Sigma_n = n \cdot \nabla u_h - \beta h^{-1} ( u_h - g )
\end{equation}

\subsection{Lagrange Multiplier Method}
To formulate a finite element discretization of the saddle point
problem~\eqref{eq:model-problem-lm-form}, we assume that
a discrete function space $\Lambda_h \subset L^2(\partial \Omega) \cap
H^{-1/2}(\partial \Omega)$ is given, and we equip $V_h$, $\Lambda_h$ and
the total approximation
space $V_h \times \Lambda_h$ with the natural norms
\begin{align}
  \tn u \tn^2 &= \| \nabla u \|^2_\Omega + \| h^{-1/2} u
  \|^2_{\partial \Omega}
  \\
  \tn \lambda \tn ^2 &= \| h^{1/2} \lambda \|^2_{\partial \Omega}
  \\
  \tn (u,\lambda) \tn^2 &= \tn u \tn^2 + \tn \lambda \tn^2
\end{align}
respectively, see \citet{Pitkaeranta1979}.
Employing the discrete norms
$\tn v \tn$ and $ \tn \mu \tn $, it is well-known
\cite{Pitkaeranta1979,Pitkaeranta1980} that the approximation space $\Lambda_h$
has to be designed carefully in order to satisfy the
discrete equivalent of the inf-sup condition~\eqref{eq:inf-sup-cont}.
Therefore, a stabilized Lagrange multiplier method has been proposed
by \citet{BarbosaHughes1991,BarbosaHughes1992} where residual terms
were added to circumvent the inf-sup condition~\eqref{eq:infsup}.
Recently, a generalized approach based on projection stabilized
Lagrange multipliers has been proposed by \citet{Burman2013}.

To cover a broad range of stable and stabilized Lagrange multiplier
methods, we assume that the discrete saddle point problem is of the
following form:
find $(u_h,\lambda_h) \in V_h \times {\Lambda}_h$
such that
\begin{equation}
  A_h(u_h,\lambda_h; v, \mu) = L_h(v,\mu)
  \quad \foralls (v,\mu) \in {V}_h \times
  {\Lambda}_h
  \label{eq:lm-weak-form}
\end{equation}
where
\begin{align}
  A_h(u,\lambda; v, \mu)
  &= a(u_h, v) + b(\lambda_h, v) + b(\mu, u_h) - c_h(u,\lambda;v,\mu)
  \label{eq:lm-weak-form-lhs}
  \\
  L_h(v,\mu)
  &=
  l(v) + (g,\mu)_{\partial \Omega}
  \label{eq:lm-weak-form-rhs}
\end{align}
Then, the approximation of the normal flux~\eqref{eq:sigman} is 
naturally defined by the negative of the
discrete Lagrange multiplier:
\begin{equation}
  \Sigma_n = - \lambda_h
  \label{eq:flux-definition-lm-case}
\end{equation}
In the variational form~\eqref{eq:lm-weak-form-lhs}, the bilinear form
$c_h(\cdot,\cdot)$ represents a consistent, possibly vanishing
stabilization form such the inf-sup condition 
\begin{equation}
  \label{eq:infsup}
  \sup_{(v,\mu)\in V_h\times \Lambda_h\setminus\{(0,0)\}}
  \frac{A_h(u,\lambda; v,\mu)}{\tn (v,\mu) \tn}
  \gtrsim \tn (u,\lambda) \tn
\end{equation}
holds, as well as the continuity condition
\begin{equation}
  A_h(u,\lambda; v,\mu) \lesssim 
  \tn (u,\lambda) \tn \, 
  \tn (v,\mu) \tn
  \label{eq:continuity-lm}
\end{equation}
and the 
error estimate
\begin{equation}
  \tn (u-u_h,\lambda-\lambda_h) \tn \lesssim h \| u \|_{2,\Omega} +
  h^{3/2} \| \lambda \|_{1,\partial \Omega}
  \label{eq:error-estimate-lm}
\end{equation}
Well-known Lagrange multiplier discretizations
which are covered by these assumptions are described and analyzed 
in \cite{Pitkaeranta1979,Pitkaeranta1980} and
\cite{BarbosaHughes1991,BarbosaHughes1992, Stenberg1995}.
In \cite{Pitkaeranta1979,Pitkaeranta1980}, 
Pitk\"aranta proved certain local stability conditions, roughly stating
that the pairing $P^1_c(\mcT_h) \times P^0_{\mathrm{dc}}(\Gamma_H)$ is
stable, if the mesh size $H$ of a given discretization $\Gamma_H$ of
the boundary $\partial \Omega = \Gamma$ satisfies the condition $h
\leqslant c H$ for some $c > 1$.
To avoid additional meshing of the boundary and to use 
the natural
space
\begin{equation*}
  \Lambda_h = \{ \mu \in L^2(\partial \Omega) | \; \mu \in P^{0}(F)
  \; \foralls F \in \partial \mcT_h\}
\end{equation*}
defined on the trace mesh $\partial \mesh$,
a stabilized symmetric Lagrange Multiplier
approach was proposed by \citet{BarbosaHughes1991,BarbosaHughes1992}.
\citet{Stenberg1995} simplified the approach by showing that the
weak formulation~\eqref{eq:lm-weak-form} combined with the stabilization form 
\begin{equation}
  c_h(u,\lambda;v,\mu) = \alpha h (\lambda + n \cdot \nabla u, \mu + n \cdot \nabla v)_{\partial \Omega}
  \label{eq:stabilization-form-stenberg}
\end{equation}
satisfies the inf-sup condition~\eqref{eq:infsup}, the continuity
condition~\eqref{eq:continuity-lm} and thus the error
estimate~\eqref{eq:error-estimate-lm} when $0 < \alpha < C_I$,
with $C_I$ being the constant
in~\eqref{eq:inverse-estimate-for-facets}.

Finally, we would like to mention the general approach
by \citet{Burman2013}. In this method,
the stabilization operator is given by some symmetric form
$c_h(\lambda, \mu)$
which, roughly speaking, controls the distance 
between a given discretization space $\Lambda_h$ and
another discrete space $L_h$ where $V_h \times L_h$ presents an
inf-sup stable pairing. Generally, the stabilization form is only
required to be optimal weakly consistent and
to not clutter the presentation, we skip
the details for the trivial adaption of our approach to this variant.

\section{Error Representation Formulas}
\label{sec:error-representation-formulas}
In this section, we establish the error representation formulas for
the discrete boundary fluxes.  The representation formula will later
allow us to bound the $L^2$-error of the boundary flux approximations
in terms of interpolation errors and a stability estimate for the
discrete solution to a suitable dual problem.

\subsection{Nitsche's Method}
For given boundary data $\psi \in L^2(\partial \Omega)$,
we define the discrete dual problem for Nitsche's method as follows:
find $\phi_h \in V_h$ such that
\begin{equation}
a_h(v, \phi_h) = m_{\psi,h}(v) \quad \forall v \in V_{h}
\label{eq:discrete-dual-problem-nitsche}
\end{equation}
where $a_h(\cdot,\cdot)$ is defined in (\ref{eq:ah}) and
\begin{equation}
m_{\psi,h}(v) = \beta h^{-1} (\psi, v )_{\partial \Omega}
- (\psi, n \cdot \nabla v )_{\partial \Omega}
\end{equation}

Setting $v = e_h = \pi_h u - u_h$ we obtain
\begin{equation}
a_h (e_h, \phi_h) = m_{\psi,h}(e_h)
\end{equation}
Using Galerkin orthogonality, we note that the left hand side
can be written
\begin{equation}
a_h(e_h, \phi_h) = a_h(\pi_h u - u, \phi_h)
\end{equation}
and for the right hand side
\begin{equation}
m_{\psi,h}(e_h) = m_{\psi,h}(\pi_h u - u) + m_{\psi,h}(u - u_h )
\end{equation}
where the second term takes the form
\begin{align}
m_{\psi,h}( u - u_h )
&= (\beta h^{-1} (u - u_h),\psi) - (n \cdot \nabla (u - u_h) , \psi )_{\partial \Omega}
\\
&= (\beta h^{-1} (g - u_h) + n \cdot \nabla u_h -   n \cdot \nabla u,\psi)_{\partial \Omega}
\\
&=(\Sigma_n(u_h) - \sigma_n(u), \psi)_{\partial \Omega}
\end{align}
Collecting these identities, we arrive at the error representation formula
\begin{equation}
(\sigma_n(u) - \Sigma_n(u_h), \psi)_{\partial \Omega}
= a_h(u - \pi_h u , \phi_h) - m_{\psi,h}(u - \pi_h u)
\label{eq:nitsche-flux-error-repres}
\end{equation}
Thus we have the following
\begin{lemma}
  \label{lemma:errorrep}
  With $\sigma_n(u)$ and $\Sigma_n(u_h)$
  defined by (\ref{eq:sigman}) and (\ref{eq:Sigman}) it holds
\begin{equation}
  \label{eq:errorrep}
\|\sigma_n(u) - \Sigma_n(u_h) \|_{\partial \Omega}
\leqslant \sup_{\psi \in L^2(\partial\Omega) \setminus \{0\}} \frac{1}{\|\psi\|_{\partial \Omega}}
\Big( |a_h(u - \pi_h u , \phi_h)| + |m_{\psi,h}(u - \pi_h u)| \Big)
\end{equation}
\end{lemma}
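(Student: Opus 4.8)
The plan is to observe that the error representation formula \eqref{eq:nitsche-flux-error-repres} already holds for an arbitrary test function $\psi \in L^2(\partial\Omega)$, so the lemma is essentially a matter of packaging that identity together with a duality characterization of the $L^2$-norm on the boundary. Concretely, I would start from \eqref{eq:nitsche-flux-error-repres}, rewritten as
\begin{equation*}
(\sigma_n(u) - \Sigma_n(u_h), \psi)_{\partial \Omega}
= a_h(u - \pi_h u , \phi_h) - m_{\psi,h}(u - \pi_h u),
\end{equation*}
valid for every $\psi \in L^2(\partial\Omega)$, where $\phi_h = \phi_h(\psi)$ is the solution of the discrete dual problem \eqref{eq:discrete-dual-problem-nitsche} with data $\psi$ (so $\phi_h$ depends linearly on $\psi$).

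The key step is then the standard duality identity
\begin{equation*}
\| \sigma_n(u) - \Sigma_n(u_h) \|_{\partial \Omega}
= \sup_{\psi \in L^2(\partial\Omega) \setminus \{0\}}
\frac{(\sigma_n(u) - \Sigma_n(u_h), \psi)_{\partial \Omega}}{\| \psi \|_{\partial \Omega}},
\end{equation*}
which holds because $L^2(\partial\Omega)$ is a Hilbert space and $\sigma_n(u) - \Sigma_n(u_h) \in L^2(\partial\Omega)$ (this membership follows from the definitions \eqref{eq:sigman} and \eqref{eq:flux-definition-nitsche-case} together with $u \in H^2(\Omega)$, so that both terms have an $L^2$ trace on $\partial\Omega$; it is worth a sentence of justification since, a priori, one only knows $\sigma_n \in H^{-1/2}$). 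Substituting the error representation formula for the numerator and then applying the triangle inequality $|x - y| \leqslant |x| + |y|$ inside the supremum yields exactly \eqref{eq:errorrep}.

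There is no real obstacle here; the only point that deserves care is the regularity remark ensuring the boundary flux error genuinely lies in $L^2(\partial\Omega)$ so that the Hilbert-space duality is applicable with the supremum attained, and keeping track of the fact that $\phi_h$ on the right-hand side is tied to the $\psi$ in the supremum. I would also note in passing that the same argument applies verbatim to the Lagrange multiplier discretizations once the analogous error representation formula has been established, so that the remainder of the paper can proceed uniformly. The whole proof is two or three lines once these bookkeeping points are stated.
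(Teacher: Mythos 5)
Your proposal is correct and matches the paper's own argument: the derivation of the representation formula \eqref{eq:nitsche-flux-error-repres} immediately preceding the lemma \emph{is} the proof, and the stated bound follows exactly as you describe by the $L^2(\partial\Omega)$ duality characterization of the norm plus the triangle inequality. Your added remark that one should check $\sigma_n(u)-\Sigma_n(u_h)\in L^2(\partial\Omega)$ (which holds under the $H^2$-regularity assumed throughout) is a reasonable point of care that the paper leaves implicit.
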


\subsection{Lagrange Multiplier Method}
We consider the following discrete dual problem: 
find $(\phi_h, \theta_h) \in {V}_h \times
\Lambda_h$ such that
\begin{align}
\label{eq:discrete-dual-problem-lagrange}
  A_h(v, \mu; \phi_h, \theta_h) = m_{\psi,h}(\mu)
  \quad \foralls (v,\mu) \in {V}_h \times
  {\Lambda}_h
\end{align}
where $A_h(\cdot,\cdot)$ is defined as in~\eqref{eq:lm-weak-form-lhs}
and
\begin{equation}
  m_{\psi,h}(\mu)
  =  (\psi, \mu)_{\partial \Omega}
  \label{eq:lm-discrete-dual-rhs}
\end{equation}
Setting $(v,\mu) = (\pi_h u - u_h, \pi_h \lambda - \lambda_h)$ and
using Galerkin orthogonality, we obtain
\begin{align*}
  m_{\psi,h}(\pi_h \lambda - \lambda_h)
  &= A_h(\pi_h u - u_h, \pi_h \lambda - \lambda_h; \phi_h, \theta_h)
  \\
  &= A_h(\pi_h u - u, \pi_h \lambda - \lambda; \phi_h, \theta_h)
\end{align*}
If we write $ (\lambda - \lambda_h, \psi) = m_{\psi,h}(\lambda - \pi_h \lambda)
+ m_{\psi,h}(\pi_h \lambda - \lambda_h)$,
we arrive at an error representation form similar
to~\eqref{eq:nitsche-flux-error-repres}:
\begin{equation*}
  (\lambda - \lambda_h, \psi)_{\partial \Omega}
  = A_h(\pi_h u - u, \pi_h \lambda - \lambda; \phi_h, \theta_h)
  - m_{\psi, h}(\lambda - \pi_h \lambda)
\end{equation*}
Consequently, the flux error
$ \|\sigma_n(u) - \Sigma_n(u_h) \|_{\partial \Omega}
=
\| \lambda - \lambda_h \|_{\partial \Omega}
$
can be estimated via following 
\begin{lemma}
    \label{lem:errorrep-lm}
  It holds
  \begin{equation}
    \label{eq:errorrep-lm}
    \| \lambda - \lambda_h \|_{\partial \Omega}
    \leqslant \sup_{\psi \in L^2(\partial\Omega) \setminus \{0\}} \frac{1}{\|\psi\|_{\partial \Omega}}
    \Big(
    |A_h(\pi_h u - u, \pi_h \lambda - \lambda; \phi_h, \theta_h)|
    + |m_{\psi,h}(\lambda - \pi_h \lambda)|
    \Big)
  \end{equation}
\end{lemma}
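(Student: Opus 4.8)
The statement to prove is Lemma~\ref{lem:errorrep-lm}, the error representation formula for the Lagrange multiplier method. The plan follows exactly the same pattern as the Nitsche case in Lemma~\ref{lemma:errorrep}, so I would simply make the three preceding paragraphs rigorous and then finish by a duality/supremum argument.

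\textbf{The plan.} First I would fix an arbitrary test function $\psi \in L^2(\partial\Omega)$ and let $(\phi_h,\theta_h) \in V_h \times \Lambda_h$ be the solution of the discrete dual problem~\eqref{eq:discrete-dual-problem-lagrange}. The key algebraic steps are: (i) choose the particular test pair $(v,\mu) = (\pi_h u - u_h, \pi_h\lambda - \lambda_h)$ in~\eqref{eq:discrete-dual-problem-lagrange}, which gives $m_{\psi,h}(\pi_h\lambda - \lambda_h) = A_h(\pi_h u - u_h, \pi_h\lambda - \lambda_h; \phi_h, \theta_h)$; (ii) use Galerkin orthogonality — i.e. subtract the identity $A_h(u,\lambda;v,\mu) = L_h(v,\mu)$ for the continuous solution from the discrete equation $A_h(u_h,\lambda_h;v,\mu) = L_h(v,\mu)$, noting both hold for all $(v,\mu)\in V_h\times\Lambda_h$, to replace $\pi_h u - u_h$ by $\pi_h u - u$ and $\pi_h\lambda - \lambda_h$ by $\pi_h\lambda-\lambda$ inside $A_h$; and (iii) split $m_{\psi,h}$ linearly, $(\lambda-\lambda_h,\psi)_{\partial\Omega} = m_{\psi,h}(\lambda - \pi_h\lambda) + m_{\psi,h}(\pi_h\lambda - \lambda_h)$, using the definition~\eqref{eq:lm-discrete-dual-rhs} of $m_{\psi,h}$ together with $\Sigma_n = -\lambda_h$ so that $(\sigma_n(u)-\Sigma_n(u_h),\psi)_{\partial\Omega} = (\lambda-\lambda_h,\psi)_{\partial\Omega}$ (recall $\lambda = -\sigma_n(u)$). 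Combining (i)--(iii) yields the displayed error representation identity
\begin{equation*}
  (\lambda - \lambda_h, \psi)_{\partial \Omega}
  = A_h(\pi_h u - u, \pi_h \lambda - \lambda; \phi_h, \theta_h)
  - m_{\psi, h}(\lambda - \pi_h \lambda).
\end{equation*}

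\textbf{Finishing the argument.} From this identity I would apply the triangle inequality on the right-hand side, then divide by $\|\psi\|_{\partial\Omega}$ (for $\psi \neq 0$) and take the supremum over all nonzero $\psi \in L^2(\partial\Omega)$. Since $L^2(\partial\Omega)$ is a Hilbert space, $\|\lambda - \lambda_h\|_{\partial\Omega} = \sup_{\psi \neq 0} (\lambda-\lambda_h,\psi)_{\partial\Omega}/\|\psi\|_{\partial\Omega}$, which — because the left-hand side of the identity does not depend on $(\phi_h,\theta_h)$ while the right-hand side does — gives precisely the claimed bound~\eqref{eq:errorrep-lm}. One should also note that $(\phi_h,\theta_h)$ is well-defined because~\eqref{eq:discrete-dual-problem-lagrange} is governed by the same bilinear form $A_h$, whose discrete inf-sup stability~\eqref{eq:infsup} and continuity~\eqref{eq:continuity-lm} guarantee existence and uniqueness of the dual solution for every right-hand side $\mu \mapsto m_{\psi,h}(\mu)$; strictly speaking one tests against $(\phi_h,\theta_h)$ swapped into the second slot, so stability of the transposed problem is what is used, which follows from the same inf-sup and continuity constants.

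\textbf{Main obstacle.} There is essentially no deep obstacle here; the lemma is a bookkeeping statement and all nontrivial work is deferred to later sections. The one point requiring care is keeping the argument slots of the nonsymmetric form $A_h$ straight — in~\eqref{eq:discrete-dual-problem-lagrange} the dual variables $(\phi_h,\theta_h)$ sit in the \emph{second} pair of arguments whereas in~\eqref{eq:lm-weak-form} the primal solution sits in the first pair — so that Galerkin orthogonality is invoked with the test pair in the correct position, and in confirming that the stabilization form $c_h$ (which may be nonsymmetric in general, though symmetric in the Stenberg case~\eqref{eq:stabilization-form-stenberg}) is carried through consistently. Once the slots are tracked correctly, the remaining manipulations are the linearity of $m_{\psi,h}$ and the identification $\Sigma_n = -\lambda_h$, both immediate.
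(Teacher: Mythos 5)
Your proposal is correct and follows essentially the same route as the paper: test the discrete dual problem with $(v,\mu)=(\pi_h u - u_h,\pi_h\lambda-\lambda_h)$, invoke Galerkin orthogonality to replace the discrete solution by the exact one inside $A_h$, split $m_{\psi,h}$ linearly, and conclude by duality in $L^2(\partial\Omega)$. The added remarks on well-posedness of the (transposed) dual problem and on tracking the argument slots of $A_h$ are sound but not needed beyond what the paper records.
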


\section{Stability Bounds for the Discrete Dual Problem}
\label{sec:stability-bounds}
From the error representation formula stated in
Lemma~\ref{lemma:errorrep} and Lemma~\ref{lem:errorrep-lm}, we note
that in order to prove estimates for the flux in the $L^2$-norm, we
need to consider stability bounds in terms of the $L^2$-norm of
$\psi$. 
\citet{Chabrowski1991} proved such estimates for the corresponding continuous
problem: find $\phi : \Omega \rightarrow \R$ such that
\begin{align}
  \label{eq:contduala}
  -\Delta \phi &= 0 \quad \text{in $\Omega$}
  \\ \label{eq:contdualb}
  u&= \psi \quad \text{on $\partial \Omega$}
\end{align}
with $\psi \in L^2(\partial \Omega)$.
To state the basic energy type estimate, we shall introduce
some notation that will also be needed in our forthcoming developments.
Let $\rho(x) = \text{dist}(x,\partial \Omega)$ be the minimal distance
between $x \in \Omega$ and $\partial \Omega$ and $p(x)\in \partial \Omega$
be the point closest to $x \in \Omega$. We note that
$p(x) = x + n(p(x)) \rho(x)$,
where $n(p(x))$ is the exterior unit normal to $\partial \Omega$ at
$p(x)$, and that there is a constant $\delta_0>0$, only dependent on
the curvature of the boundary, such that for each $x\in \Omega$ with
$\rho(x) \leqslant \delta_0$ there is a unique $p(x) \in \partial \Omega$.
Next, we define the sets
\begin{equation}
\Omega_\delta = \{ x \in \Omega : \rho(x) > \delta \}
\label{eq:omega_delta}
\end{equation}
where $0\leqslant \delta \leqslant \delta_0$, and we note that the closest point
mapping $p: \partial \Omega_\delta \rightarrow \partial \Omega$
is a bijection with inverse denoted by $p^{-1}_\delta$.
Referring to \cite[Lemma 14.16]{GilbardTrudinger2001}, we recall that
that $\rho \in C^k(\Omega_{\delta})$ for
$k \geqslant 2$ for $\delta_0$ chosen small enough.
If we define a
weighted norm $\|v\|_{\rho,\Omega}$ by
\begin{equation*}
\|v \|^2_{\rho,\Omega} = \int_\Omega v^2 \rho \dx
\end{equation*}
then \citet{Chabrowski1991} proved the following result for the continuous problem:
if $\phi \in W^{1,2}_{\mathrm{loc}}$ satisfies
problem~\eqref{eq:contduala}--\eqref{eq:contdualb}
in the sense that
\begin{equation*}
\| \phi\circ p^{-1}_\delta - \psi\|_{\partial \Omega} \rightarrow 0
\quad \text{when $\delta \rightarrow 0^+$}
\end{equation*}
then
\begin{equation*}
\| \nabla \phi \|^2_{\rho,\Omega} + \| \phi \|_{\rho,\Omega}^2
+ \sup_{0 \leqslant \delta \leqslant \delta_0} \| \phi \|^2_{\partial \Omega_\delta}
\lesssim \| \psi \|^2_{\partial \Omega}.
\end{equation*}
We shall now prove a corresponding estimate for the discrete dual
problems~\eqref{eq:discrete-dual-problem-nitsche} and
\eqref{eq:discrete-dual-problem-lagrange}.
In order to formulate our results, we introduce the shifted weight
function
\begin{equation}
  \label{eq:shiftedweight}
  \rho_\delta = \max(0, \rho - \delta),\quad \deltahone \leqslant \delta \leqslant \delta_0
\end{equation}
and we let
\begin{equation}\label{eq:deltahone}
  \deltahone = C' h
\end{equation}
with the constant $C'>0$ chosen such that $\rho_{\deltahone}=0$ on all elements
with a face on the boundary $\partial \Omega$, see
Figure~\ref{fig:regions}.
The existence of such a constant $C'$
follows from the assumed quasi-uniformity of the mesh.
In the case where $\Omega$ is not a $C^2$-domain 
but rather a convex polyhedral domain described by faces $\{
F_i\}_{i=1}^N$, we define stripes 
$
S_{\delta}(F_i) = \{ x\in\R: \exists x_0 \in F_i \wedge \exists t \text{ s.t. } x = x_0
  + t\cdot n  \wedge 0\leqslant t \leqslant \delta \}
  $, cf. Figure~\ref{fig:poyhedron-with-stripes}.
Then the analysis presenting in this work carries over
by considering each stripe at a time and the fact that locally only a finite number of
stripes overlaps. 
\begin{figure}[htb]
    \centering
    \def\svgwidth{0.50\textwidth}
    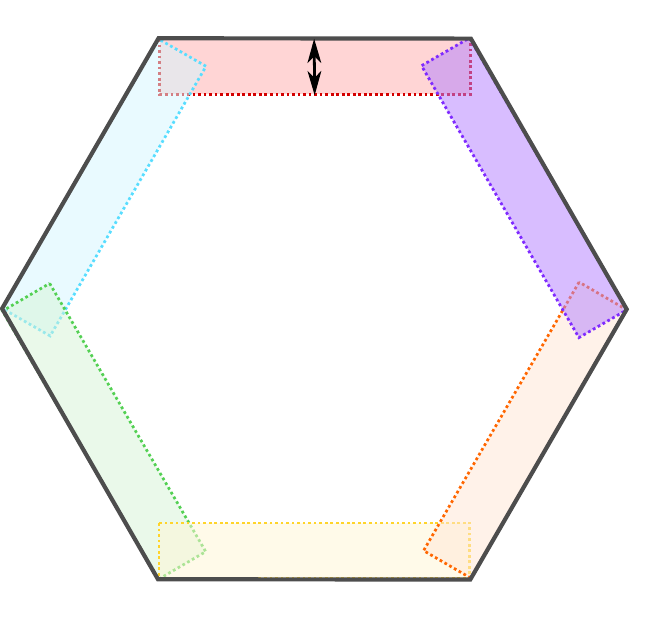
    \caption{Polyhedral domain $\Omega$ with faces
      $\{F_i\}_{i=1}^6$ with corresponding stripes
      $S_{\delta}(F_i)$ charted in different colors.
    }
    \label{fig:poyhedron-with-stripes}
\end{figure}
We state now the main result of this section.
\begin{proposition}
  \label{prop:stab}
  Let $\phi_h \in V_h$ be the solution
  of the discrete dual
  problem~\eqref{eq:discrete-dual-problem-nitsche}.
  Then $\phi_h$ satisfies the stability estimate
  \begin{equation}
    \label{eq:stabfinal-nitsche}
    \| \nabla \phi_h \|^2_{\rho_{\deltahone},\Omega}
    + h \| \nabla \phi_h \|^2_\Omega
    + \sup_{0 \leqslant \delta \leqslant \delta_0} \| \phi_h \|^2_{\partial \Omega_\delta}
    + \| \phi_h \|_{\Omega}^2
    \lesssim \| \psi \|^2_{\partial \Omega}
  \end{equation}
  Alternatively, if  $(\phi_h, \theta_h) \in V_h \times \Lambda_h$
  is the solution of the discrete dual
  problem~\eqref{eq:discrete-dual-problem-lagrange},
  then
  \begin{equation}
    \label{eq:stabfinal-lagrange}
    \| \nabla \phi_h \|^2_{\rho_\deltahone,\Omega}
    + h \| \nabla \phi_h \|^2_\Omega
    + h^2 \| \theta_h \|^2_{\partial \Omega}
    + \sup_{0 \leqslant \delta \leqslant \delta_0} \| \phi_h \|^2_{\partial \Omega_\delta}
    + \| \phi_h \|_{\Omega}^2
    \lesssim \| \psi \|^2_{\partial \Omega}
  \end{equation}
\end{proposition}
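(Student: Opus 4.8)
The plan is to mimic Chabrowski's continuous argument at the discrete level, testing the discrete dual problem with cleverly weighted functions. First I would test \eqref{eq:discrete-dual-problem-nitsche} with $v = \rho_{\deltahone}\phi_h$ — or rather, since $\rho_{\deltahone}\phi_h \notin V_h$, with $v = \pi_h(\rho_{\deltahone}\phi_h)$, the nodal interpolant. The diffusion term $a(\phi_h,\pi_h(\rho_{\deltahone}\phi_h))$ should, after integrating the commutator $[\pi_h,\rho_{\deltahone}]$ away and using that $\rho_{\deltahone}=0$ on all elements touching $\partial\Omega$, produce the leading term $\|\nabla\phi_h\|^2_{\rho_{\deltahone},\Omega}$ plus a term of the form $\int_\Omega \nabla\phi_h\cdot\nabla\rho_{\deltahone}\,\phi_h\dx$, which on $\Omega_{\deltahone}$ has $|\nabla\rho_{\deltahone}|=1$ and is absorbed by a Cauchy--Schwarz with a $\rho_{\deltahone}$ weight and a compensating $\|\phi_h\|^2_{\Omega}$ contribution. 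Because the test function vanishes on the boundary strip, the Nitsche boundary terms in $a_h$ and the right-hand side $m_{\psi,h}$ contribute nothing from this particular test; this is the key simplification that the choice of $C'$ in \eqref{eq:deltahone} buys us.

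Second, to pick up the boundary control I would test with $v=\phi_h$ itself: coercivity \eqref{eq:nitsche-coerciv} gives $\tn\phi_h\tn^2 \lesssim a_h(\phi_h,\phi_h) = m_{\psi,h}(\phi_h) = \beta h^{-1}(\psi,\phi_h)_{\partial\Omega} - (\psi,n\cdot\nabla\phi_h)_{\partial\Omega}$, and the right-hand side is bounded by $\|\psi\|_{\partial\Omega}\big(h^{-1/2}\|h^{-1/2}\phi_h\|_{\partial\Omega} + h^{1/2}\|h^{1/2}n\cdot\nabla\phi_h\|_{\partial\Omega}\big) \lesssim h^{-1/2}\|\psi\|_{\partial\Omega}\tn\phi_h\tn$, yielding $\tn\phi_h\tn \lesssim h^{-1/2}\|\psi\|_{\partial\Omega}$, i.e. $h\|\nabla\phi_h\|^2_\Omega + \|\phi_h\|^2_{\partial\Omega} \lesssim \|\psi\|^2_{\partial\Omega}$. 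This is the crude-but-essential estimate controlling the $h\|\nabla\phi_h\|^2_\Omega$ term and the $\delta=0$ endpoint of the supremum. The $\|\phi_h\|^2_\Omega$ term then follows from a discrete Poincaré/Friedrichs inequality (valid since $\|\phi_h\|^2_{\partial\Omega}$ is controlled), or alternatively from a discrete duality (Aubin--Nitsche) argument.

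Third, for the interior-slices bound $\sup_{0\leqslant\delta\leqslant\delta_0}\|\phi_h\|^2_{\partial\Omega_\delta}$ I would run a Rellich-type / trace argument on the manifolds $\partial\Omega_\delta$. Writing $\|\phi_h\|^2_{\partial\Omega_\delta}$ as a boundary integral over the level set $\{\rho=\delta\}$ and using the coarea formula together with the identity $\partial_\delta\|\phi_h\|^2_{\partial\Omega_\delta}$ (which involves $\int 2\phi_h\,n\cdot\nabla\phi_h$ plus a curvature term), I would integrate in $\delta$ from the already-controlled base case near $\delta\approx 0$, bounding the $\delta$-derivative by $\|\nabla\phi_h\|_{\Omega_\delta}\|\phi_h\|_{\Omega_\delta}$-type quantities which are in turn controlled by the first step's weighted norm. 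Care is needed because $\rho$ is only $C^k$ on $\Omega_{\delta_0}$, but that is exactly why $\delta_0$ was chosen small; for the polyhedral case one argues stripe-by-stripe as indicated after \eqref{eq:deltahone}.

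Finally, the Lagrange multiplier variant \eqref{eq:stabfinal-lagrange} follows the same template but with \eqref{eq:infsup} and \eqref{eq:continuity-lm} replacing coercivity: test the discrete dual \eqref{eq:discrete-dual-problem-lagrange} with $(v,\mu)$ chosen as the inf-sup maximizer against $(\phi_h,\theta_h)$ to get $\tn(\phi_h,\theta_h)\tn \lesssim h^{-1/2}\|\psi\|_{\partial\Omega}$, which unpacks into $h\|\nabla\phi_h\|^2_\Omega + h^2\|\theta_h\|^2_{\partial\Omega} + \|\phi_h\|^2_{\partial\Omega}\lesssim\|\psi\|^2_{\partial\Omega}$; the weighted-test-function step is essentially unchanged since $\rho_{\deltahone}\phi_h$ still annihilates all boundary couplings $b(\cdot,\cdot)$ and the consistent stabilization $c_h$. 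I expect the main obstacle to be the first step: controlling the interpolation commutator $\pi_h(\rho_{\deltahone}\phi_h)-\rho_{\deltahone}\phi_h$ well enough — one needs elementwise estimates like $\|\pi_h(\rho_{\deltahone}w)-\rho_{\deltahone}w\|_{1,K}\lesssim h_K\|\rho_{\deltahone}\|_{W^{2,\infty}(K)}\|w\|_{1,K}$ combined with $\|\nabla^2\rho_{\deltahone}\|_{L^\infty}$ bounds from the $C^2$-regularity of $\rho$ — and then absorbing the resulting lower-order terms without circularity, which forces the slightly awkward bookkeeping of carrying $h\|\nabla\phi_h\|^2_\Omega$ and $\|\phi_h\|^2_\Omega$ as simultaneously-proven quantities rather than proving them in sequence.
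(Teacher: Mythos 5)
Your overall strategy --- an interpolated weighted test function $\pi_h(\rho_{\deltahone}\phi_h)$ for the interior, coercivity (resp.\ inf-sup) with $v=\phi_h$ for the $O(h)$-scaled energy and the boundary trace, and a Rellich/Green identity for the slice norms --- is exactly the paper's. The genuine gap is in how you close the argument for $\|\phi_h\|^2_\Omega$ and, consequently, for the whole weighted estimate. A discrete Poincar\'e inequality only gives $\|\phi_h\|^2_\Omega\lesssim\|\nabla\phi_h\|^2_\Omega+\|\phi_h\|^2_{\partial\Omega}\lesssim h^{-1}\|\psi\|^2_{\partial\Omega}$, a factor $h^{-1}$ short of the claim. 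This is not cosmetic: when you integrate by parts in $(\nabla\phi_h,\nabla(\rho_\delta\phi_h))$, the term $(\phi_h\nabla\phi_h,\nabla\rho_\delta)$ becomes $-\tfrac12(\phi_h^2,\Delta\rho_\delta)_{\Omega_\delta}+\tfrac12(\phi_h^2,n\cdot\nabla\rho_\delta)_{\partial\Omega_\delta}$, so an \emph{unweighted} $\|\phi_h\|^2_{\Omega_\delta}$ lands on the right-hand side with an $O(1)$ constant, while the left-hand side contains no unweighted $\|\phi_h\|^2$ that could absorb it. Your proposed ``simultaneous bookkeeping'' of $h\|\nabla\phi_h\|^2_\Omega$ and $\|\phi_h\|^2_\Omega$ therefore does not close; the system of inequalities is genuinely circular.

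The paper breaks the circle with a device your proposal lacks: it first proves the weighted estimate for a \emph{regularized} dual problem $a_h(v,\phi_{h,1})+\kappa(v,\phi_{h,1})=m_{\psi,h}(v)$, where the added mass term puts $\kappa\|\phi_{h,1}\|^2_{\rho_\delta,\Omega}$ on the left, so that the offending term, bounded via $\|\phi_{h,1}\|^2_{\Omega_{\deltahone}}\lesssim d\sup_\delta\|\phi_{h,1}\|^2_{\partial\Omega_\delta}+d^{-1}\|\phi_{h,1}\|^2_{\rho,\Omega_d}$, can be absorbed by taking $d$ small and $\kappa$ large. It then splits $\phi_h=\phi_{h,0}+\phi_{h,1}$ with $a_h(\phi_{h,0},v)=\kappa(\phi_{h,1},v)$, a discrete problem with $L^2(\Omega)$ data of size $\lesssim\|\psi\|_{\partial\Omega}$, for which the full unscaled energy estimate holds; this also delivers $\|\phi_h\|^2_\Omega\lesssim\|\psi\|^2_{\partial\Omega}$. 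Your parenthetical Aubin--Nitsche alternative could in principle supply that $L^2(\Omega)$ bound up front, but it requires full $H^2$ elliptic regularity and a careful treatment of the Nitsche boundary terms against data that is only in $L^2(\partial\Omega)$; as stated it is not a proof. Finally, your slice estimate must be driven by the weight: bounding $\partial_\delta\|\phi_h\|^2_{\partial\Omega_\delta}$ by $\|\nabla\phi_h\|_{\Omega_\delta}\|\phi_h\|_{\Omega_\delta}$ costs another factor $h^{-1/2}$; you need the identity in the form the paper uses, where $\tfrac12(\phi_h^2,n\cdot\nabla\rho_\delta)_{\partial\Omega_\delta}$ is isolated on the left and $|n\cdot\nabla\rho|\geqslant c>0$ is invoked.
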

Before we present the elaborated proof of Proposition~\ref{prop:stab}
in Section~\ref{ssec:weighted-energy-estimate}, the next section
collects useful inequalities and interpolation estimates which will be
used throughout the remaining work.

\subsection{Interpolation Error Estimates}
We recall the following trace inequality for $v \in H^1(\Omega)$:
\begin{alignat}{1}
  \| v \|_{\partial T} &\lesssim h_{T}^{-1/2} \| v \|_{T}
  + h_{T}^{1/2} \| \nabla v \|_{T}  \quad \foralls T \in \mcT
  \label{eq:trace-inequality}
  \\
  \| v \|_{T \cap \partial \Omega} &\lesssim  h_{T}^{-1/2} \| v \|_{T}
  + h_{T}^{1/2} \| \nabla v \|_{T}
  \quad \foralls T \in \mcT
  \label{eq:trace-inequality-for-FD}
\end{alignat}
See~\citet{HansboHansbo2002} for a proof
of~\eqref{eq:trace-inequality-for-FD}.  We will also need the
following well-known inverse estimates for $v_h \in V_h$:
\begin{alignat}{3}
  \| \nabla v_h \|_T &\lesssim h_T^{-1} \| v_h \|_{T} &\quad
  &\foralls T \in \mcT
  \label{eq:inverse-estimates-for-triangles}
  \\
  \| h^{1/2} n \cdot \nabla v_h \|_{F} & \lesssim \| \nabla v_h
  \|_{T} &\quad
  &\foralls T \in \mcT
  \label{eq:inverse-estimate-for-facets}
\end{alignat}
Let $\pi_h:L^2(\Omega) \rightarrow V_h$ be the standard Scott--Zhang
interpolation operator~\citep{ScottZhang1990} and recall the
interpolation error estimates
\begin{alignat}{3}
  \| v - \pi_h v \|_{r,T} &\lesssim h^{s-r}| v |_{s,\omega(T)}
&\quad 0\leqslant r \leqslant s \leqslant 2 \quad &\foralls T\in \mcT
  \label{eq:interpest0}
  \\
\| v - \pi_h v \|_{r,F} &\lesssim h^{s-r-1/2}| v |_{s,\omega(T)}
&\quad 0\leqslant r \leqslant s \leqslant 2 \quad &\foralls F \in
  \mcF
  \label{eq:interpest1}
\end{alignat}
where $\omega(T)$ is the patch of neighbors of element $T$; that is,
the domain consisting of all elements sharing a vertex with $T$.

Recalling the definition~\eqref{eq:omega_delta} of $\Omega_{\delta}$,
we introduce the $h$-band
$\mcT_{\partial \Omega_{\delta}}$
for a mesh $\mcT$ by
\begin{align}
  \mcT_{\partial \Omega_{\delta}}
  = \bigcup\{T \in \mcT: T \cap \partial
  \Omega_{\delta} \neq \emptyset \}
  \label{eq:h-band}
\end{align}
This is illustrated in Figure~\ref{fig:regions}.
We note that thanks to the quasi-uniformity
\begin{equation*}
  |\mcT_{\partial \Omega_{\delta}} |_{d} \approx  h |
  \partial \Omega_{\delta} |_{d-1}
\end{equation*}
with $| \cdot |_{d}$ and $| \cdot |_{d-1}$ denoting the volume and
area of the corresponding sets.
The trace inequality~\eqref{eq:trace-inequality-for-FD} allows to
generalize the interpolation estimate~\eqref{eq:interpest1} to
\begin{equation}
  \| v - \pi_h v \|_{r,T\cap\partial \Omega_{\delta}} 
  \lesssim h^{s-r-1/2}| v |_{s,\omega(T)}
  \quad 0\leqslant r \leqslant s \leqslant 2 \quad \foralls T \in
  \mcT
  \label{eq:interpest1-unfitted}
\end{equation}
If we in addition assume that
\begin{align}
  \label{eq:anisotropic-regularity}
  \sup_{0 \leqslant \delta \leqslant \delta_1} \|
  D^s u \|_{\partial \Omega_{\delta} \cap \omega(T)} \lesssim 1
\end{align}
for some $\delta_1$ such that
$
\omega(T) \subset \bigcup_{0\leqslant\delta\leqslant\delta_1}
{\partial \Omega_{\delta}}
$, an order $h^{1/2}$ can be recovered in estimate~\eqref{eq:interpest1}
and ~\eqref{eq:interpest1-unfitted}
by applying H\"older's inequality in normal direction to
$\partial \Omega_{\delta}$:
\begin{equation}
  \| v - \pi_h v \|_{r,T\cap\partial \Omega_{\delta}}
  \lesssim h^{s-r} \sup_{0 \leqslant \delta \leqslant \delta_0} \|
  D^s u \|_{\partial \Omega_{\delta} \cap \omega(T)}
\quad 0\leqslant r \leqslant s \leqslant 2 \quad \foralls T \in
  \mcT
  \label{eq:interpest-anis-element}
\end{equation}
We summarize our observations in the following global,
anisotropic interpolation estimate:
\begin{proposition} 
  \label{prop:interpest-anis}
  Let $u \in H^2(\Omega)$ and suppose that
    $
    \sup_{0 \leqslant \delta \leqslant \delta_1}
    \| \partial^2 u \|_{\partial \Omega{\delta}} \lesssim 1
    $
    for some $\delta_1$ such that
    $\bigcup_{0\leqslant\delta \leqslant \delta_0 }
    \mcT_{\partial \Omega_{\delta}}
    \subset
    \bigcup_{0\leqslant\delta\leqslant\delta_1 } \partial \Omega_{\delta}.
    $
    Then the interpolation error satisfies
\begin{equation}
  \label{eq:interpest-anis}
    \sup_{0 \leqslant \delta \leqslant \delta_0}
    \|  u - \pi_h u \|_{\partial \Omega_{\delta}}
    +
    \sup_{0 \leqslant \delta \leqslant \delta_0}
    h \| \nabla( u - \pi_h u ) \|_{\partial \Omega_{\delta}}
  \lesssim
  h^2   
  \sup_{0 \leqslant \delta \leqslant \delta_1}
    \| \partial^2 u \|_{\partial \Omega_\delta}
\end{equation}
\end{proposition}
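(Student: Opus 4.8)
The plan is to prove Proposition~\ref{prop:interpest-anis} by localizing to the $h$-band $\mcT_{\partial\Omega_\delta}$, applying the element-wise anisotropic interpolation estimate~\eqref{eq:interpest-anis-element}, and then summing over the (uniformly bounded in number, by quasi-uniformity) elements meeting $\partial\Omega_\delta$. Concretely, for fixed $\delta\in[0,\delta_0]$ and any element $T$ with $T\cap\partial\Omega_\delta\neq\emptyset$, estimate~\eqref{eq:interpest-anis-element} with $s=2$ gives $\|u-\pi_h u\|_{0,T\cap\partial\Omega_\delta}\lesssim h^2\sup_{0\leqslant\delta'\leqslant\delta_0}\|\partial^2 u\|_{\partial\Omega_{\delta'}\cap\omega(T)}$ and $h\|\nabla(u-\pi_h u)\|_{0,T\cap\partial\Omega_\delta}\lesssim h^2\sup_{0\leqslant\delta'\leqslant\delta_0}\|\partial^2 u\|_{\partial\Omega_{\delta'}\cap\omega(T)}$. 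The hypothesis $\bigcup_{0\leqslant\delta\leqslant\delta_0}\mcT_{\partial\Omega_\delta}\subset\bigcup_{0\leqslant\delta\leqslant\delta_1}\partial\Omega_\delta$ is exactly what is needed to ensure the patches $\omega(T)$ appearing here lie in the region where the anisotropic regularity bound $\sup_{0\leqslant\delta\leqslant\delta_1}\|\partial^2 u\|_{\partial\Omega_\delta}\lesssim 1$ is assumed, so the right-hand side is controlled by $h^2\sup_{0\leqslant\delta\leqslant\delta_1}\|\partial^2 u\|_{\partial\Omega_\delta}$.

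Next I would sum over the $h$-band. Since $\partial\Omega_\delta\cap T$ for $T\in\mcT_{\partial\Omega_\delta}$ covers $\partial\Omega_\delta$ with bounded overlap, squaring and adding the element-wise bounds yields
\begin{equation*}
  \|u-\pi_h u\|^2_{\partial\Omega_\delta}
  \lesssim h^4 \sum_{T\in\mcT_{\partial\Omega_\delta}}
  \Big(\sup_{0\leqslant\delta'\leqslant\delta_1}\|\partial^2 u\|_{\partial\Omega_{\delta'}\cap\omega(T)}\Big)^2 .
\end{equation*}
The finite-overlap property of the patches $\{\omega(T)\}$ (a consequence of shape regularity) lets me replace the sum of squared localized norms by a constant times $\big(\sup_{0\leqslant\delta\leqslant\delta_1}\|\partial^2 u\|_{\partial\Omega_\delta}\big)^2$ — more carefully, one bounds $\sum_T \|\partial^2 u\|^2_{\partial\Omega_{\delta'}\cap\omega(T)}\lesssim \|\partial^2 u\|^2_{\partial\Omega_{\delta'}}$ uniformly in $\delta'$ and then takes the supremum. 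The same argument applied to the gradient term gives $h^2\|\nabla(u-\pi_h u)\|^2_{\partial\Omega_\delta}\lesssim h^4\big(\sup_{0\leqslant\delta\leqslant\delta_1}\|\partial^2 u\|_{\partial\Omega_\delta}\big)^2$. Taking square roots and then the supremum over $\delta\in[0,\delta_0]$ on the left — the right-hand side being already independent of $\delta$ — produces~\eqref{eq:interpest-anis}.

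The main technical obstacle is the bookkeeping in the overlap/summation step: one must be careful that the supremum over $\delta'$ inside the sum can be pulled out \emph{before} summing (otherwise different elements could select different slices $\partial\Omega_{\delta'}$), and that the union of the patches $\omega(T)$ over $T$ in the band still lies inside $\bigcup_{0\leqslant\delta\leqslant\delta_1}\partial\Omega_\delta$ so that the assumed regularity applies — this is precisely the role of the containment hypothesis relating $\delta_0$ and $\delta_1$ in the statement. A clean way to handle this is to first fix $\delta'$, sum $\sum_{T}\|\partial^2 u\|^2_{\partial\Omega_{\delta'}\cap\omega(T)}\lesssim\|\partial^2 u\|^2_{\partial\Omega_{\delta'}}$ using bounded overlap, bound this by $\big(\sup_{0\leqslant\delta\leqslant\delta_1}\|\partial^2 u\|_{\partial\Omega_\delta}\big)^2$, and only afterwards note the estimate is uniform in $\delta'$; everything else is a routine combination of~\eqref{eq:trace-inequality-for-FD}, \eqref{eq:interpest1}, and H\"older's inequality in the normal direction already recorded in~\eqref{eq:interpest-anis-element}.
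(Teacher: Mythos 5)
Your overall strategy --- localize to the band $\mcT_{\partial\Omega_\delta}$, invoke the element-wise anisotropic estimate \eqref{eq:interpest-anis-element}, and sum --- is the route the paper itself takes: Proposition~\ref{prop:interpest-anis} is presented there as a direct summary of \eqref{eq:interpest1-unfitted} and \eqref{eq:interpest-anis-element}. However, the one step you correctly single out as delicate is where your argument actually breaks, and the fix you propose does not repair it. After applying \eqref{eq:interpest-anis-element} element by element and squaring, the quantity you must control is
\begin{equation*}
  \sum_{T\in\mcT_{\partial\Omega_\delta}}
  \Bigl(\sup_{\delta'}\|\partial^2 u\|_{\partial\Omega_{\delta'}\cap\omega(T)}\Bigr)^{2},
\end{equation*}
a \emph{sum of suprema}. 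Your ``clean way'' (fix $\delta'$, sum over $T$ by bounded overlap, then note uniformity in $\delta'$) controls $\sup_{\delta'}\sum_T\|\partial^2 u\|^2_{\partial\Omega_{\delta'}\cap\omega(T)}$, which is the interchange in the wrong direction: a sum of sups can exceed the sup of sums by a factor of the number of band elements, $N\sim h^{1-d}$. Concretely, if $\partial^2 u$ carries unit tangential $L^2$-mass on a different slice $\partial\Omega_{\delta'_T}\cap\omega(T)$ for each $T$, the hypothesis $\sup_{\delta'}\|\partial^2 u\|_{\partial\Omega_{\delta'}}\lesssim 1$ still holds while the sum of sups is of order $h^{1-d}$; for $d=2$ your final bound then degenerates to $h^{3/2}$, which is precisely the suboptimal rate the proposition is designed to beat.

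The repair is to reverse the order of the two operations: do not apply H\"older in the normal direction element by element. Sum the isotropic local estimate \eqref{eq:interpest1-unfitted} first, using bounded overlap of the patches, to obtain $\|u-\pi_h u\|^2_{\partial\Omega_\delta}\lesssim h^{3}\,|u|^2_{2,B_\delta}$ with $B_\delta=\bigcup_{T\in\mcT_{\partial\Omega_\delta}}\omega(T)$; then use that $B_\delta$ lies in a normal-coordinate slab of thickness $O(h)$ contained in $\bigcup_{0\leqslant\delta'\leqslant\delta_1}\partial\Omega_{\delta'}$ (this is exactly where the containment hypothesis and quasi-uniformity enter), so that a single global application of Fubini and H\"older in the normal variable gives $|u|^2_{2,B_\delta}\lesssim h\,\sup_{0\leqslant\delta'\leqslant\delta_1}\|\partial^2 u\|^2_{\partial\Omega_{\delta'}}$. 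This produces the factor $h^4$ without ever forming a sum of suprema; the gradient term follows identically with $r=1$, and taking the supremum over $\delta\in[0,\delta_0]$ is then harmless since the right-hand side is independent of $\delta$.
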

Note that the previous interpolation estimates holds if $u \in
W^{2,\infty}(\Omega)$ is the finite element solution
of~\eqref{eq:model-problem-weak-form} with \emph{strongly} imposed boundary
conditions, see \cite{RannacherScott1982}.
Here however, we only require that, roughly speaking,
$\partial^2 u \in L^2$ on manifolds close and parallel to the boundary
and $\partial^2 u \in L^{\infty}$ in normal direction
as quantified by assumption \eqref{eq:anisotropic-regularity}.

\subsection{Weighted Energy Stability}
\label{ssec:weighted-energy-estimate}
In this section, we finally prove Proposition~\ref{prop:stab}.
The main idea of the proof is to divide the domain into an interior
region and a boundary layer of thickness $O(h)$.
Away from the boundary, a
weighted stability estimate can be proven by
testing the discrete dual problems with a weighted test function.
This function is chosen such that it is identically zero in a layer of
elements next to the boundary and thus the boundary terms in the
discrete bilinear forms vanish. Since the desired weighted test
function does not reside in $V_h$ we approximate it with a Lagrange
interpolant and estimate the reminder.

Within the boundary layer, an estimate for the discrete energy
stability emanating from the coercivity of the finite element method
is established.  This stability scales with $h$ since the boundary
data only resides in $L^2$ but it holds all the way out to the
boundary. More specifically, the following lemma holds:
\begin{lemma}[Discrete Energy Stability]
  \label{lem:discrete-energy-stability}
  Let $\phi_h \in V_h$ be the solution of the discrete dual
  problem~\eqref{eq:duallambda}.
  Then for any $\kappa \geqslant 0$ it holds
\begin{equation}
  \label{eq:stabdiscrete}
  h \|\nabla \phi_h \|_\Omega^2
  +
  h^2 \|n \cdot \nabla \phi_h \|_{\partial \Omega}^2
  + h \kappa \| \phi_h \|_\Omega^2
  +\sup_{0 \leqslant \delta \leqslant \delta_h} \| \phi_h
  \|^2_{\partial \Omega_{\delta_h}}
  \lesssim \| \psi\|^2_{\partial \Omega}
\end{equation}
Alternatively, assume $(\phi_h, \theta_h) \in V_h \times Q_h$ is the
solution of the discrete dual problem~\eqref{eq:duallambdalagrange}.
Then for any $\kappa \geqslant 0$ it holds
\begin{equation}
  \label{eq:stabdiscretelagrange}
  h \|\nabla \phi_h \|_\Omega^2
  +
  h^2 \| \theta_h \|_{\partial \Omega}^2
  +
  h \kappa \| \phi_h \|_\Omega^2
  +\sup_{0 \leqslant \delta \leqslant \delta_h} \| \phi_h \|^2_{\partial \Omega_{\delta_h}}
  \leqslant
  C \| \psi\|^2_{\partial \Omega}
\end{equation}
\end{lemma}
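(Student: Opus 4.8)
The plan is to obtain~\eqref{eq:stabdiscrete} directly from the coercivity of the Nitsche form, using the discrete dual problem with its $L^2$-boundary datum $\psi$, and then simply absorbing a factor of $h$ to correct for the fact that $\psi$ only lives in $L^2(\partial\Omega)$. First I would test~\eqref{eq:discrete-dual-problem-nitsche} with $v=\phi_h$ and invoke the coercivity bound~\eqref{eq:nitsche-coerciv} to get $\tn\phi_h\tn^2 \lesssim a_h(\phi_h,\phi_h)=m_{\psi,h}(\phi_h) = \beta h^{-1}(\psi,\phi_h)_{\partial\Omega} - (\psi,n\cdot\nabla\phi_h)_{\partial\Omega}$. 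On the right-hand side I would apply Cauchy--Schwarz on $\partial\Omega$ and then weighted Young's inequality: the term $\beta h^{-1}(\psi,\phi_h)_{\partial\Omega}$ is bounded by $\tfrac12 h^{-1}\|\phi_h\|_{\partial\Omega}^2 + C h^{-1}\|\psi\|_{\partial\Omega}^2$, and the term $(\psi,n\cdot\nabla\phi_h)_{\partial\Omega}$ by $\tfrac12 h\|n\cdot\nabla\phi_h\|_{\partial\Omega}^2 + C h^{-1}\|\psi\|_{\partial\Omega}^2$. The first halves of these are absorbed into $\tn\phi_h\tn^2 = \|\nabla\phi_h\|_\Omega^2 + h\|n\cdot\nabla\phi_h\|_{\partial\Omega}^2 + h^{-1}\|\phi_h\|_{\partial\Omega}^2$ on the left, leaving $\tn\phi_h\tn^2 \lesssim h^{-1}\|\psi\|_{\partial\Omega}^2$. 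Multiplying through by $h$ yields the first three groups on the left of~\eqref{eq:stabdiscrete} except for the $\kappa$-term: $h\|\nabla\phi_h\|_\Omega^2 + h^2\|n\cdot\nabla\phi_h\|_{\partial\Omega}^2 + \|\phi_h\|_{\partial\Omega}^2 \lesssim \|\psi\|_{\partial\Omega}^2$.

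Next I would deal with the $h\kappa\|\phi_h\|_\Omega^2$ term. Since $\kappa\geqslant0$ is arbitrary, this must be read as: whatever bound on $\|\phi_h\|_\Omega^2$ we can prove, it can be multiplied by $h\kappa$ and still controlled — so in effect one needs $h\|\phi_h\|_\Omega^2\lesssim\|\psi\|_{\partial\Omega}^2$ and then a constant depending on $\kappa$ is tolerated, or $\kappa$ is understood to be fixed. I would obtain $\|\phi_h\|_\Omega^2$ from a Poincaré-type or Friedrichs inequality adapted to the weakly-imposed boundary condition: for $v\in V_h$ one has $\|v\|_\Omega^2 \lesssim \|\nabla v\|_\Omega^2 + \|v\|_{\partial\Omega}^2 \lesssim \tn v\tn^2$ (the boundary $L^2$-norm is already in $\tn\cdot\tn$ up to the $h^{-1}$ weight, which only helps). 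Hence $\|\phi_h\|_\Omega^2 \lesssim \tn\phi_h\tn^2 \lesssim h^{-1}\|\psi\|_{\partial\Omega}^2$, so $h\|\phi_h\|_\Omega^2\lesssim\|\psi\|_{\partial\Omega}^2$, and the $h\kappa$ version follows with a constant absorbing $\kappa$.

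For the last term, $\sup_{0\leqslant\delta\leqslant\delta_h}\|\phi_h\|_{\partial\Omega_\delta}^2$, I would use the trace inequality~\eqref{eq:trace-inequality-for-FD} on each element $T$ meeting $\partial\Omega_\delta$: $\|\phi_h\|_{T\cap\partial\Omega_\delta}^2 \lesssim h^{-1}\|\phi_h\|_T^2 + h\|\nabla\phi_h\|_T^2$. Summing over the $h$-band $\mcT_{\partial\Omega_\delta}$ — which has total measure $\approx h|\partial\Omega_\delta|_{d-1}$, so is comparable across all $\delta\leqslant\delta_h=O(h)$ — and using $\|\phi_h\|_{\partial\Omega_\delta}^2 \lesssim h^{-1}\|\phi_h\|_\Omega^2 + h\|\nabla\phi_h\|_\Omega^2 \lesssim h^{-2}\|\psi\|_{\partial\Omega}^2 \cdot h = \dots$. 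I need to be slightly careful here: the crude global bound gives only $h^{-1}\|\phi_h\|_\Omega^2 \lesssim h^{-2}\|\psi\|_{\partial\Omega}^2$, which is too weak. Instead I would restrict the trace estimate to the thin band and use $\|\phi_h\|_{\mcT_{\partial\Omega_\delta}}^2$ together with the discrete stability already proven, noting $h^{-1}\|\phi_h\|_{\mcT_{\partial\Omega_\delta}}^2 + h\|\nabla\phi_h\|_{\mcT_{\partial\Omega_\delta}}^2 \lesssim h^{-1}\|\phi_h\|_{\partial\Omega}^2\cdot(\text{const}) + h\|\nabla\phi_h\|_\Omega^2$ by a one-sided trace/layer argument, bounding the band $L^2$-norm of $\phi_h$ by its boundary value plus $h\|\nabla\phi_h\|$. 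This gives $\sup_\delta\|\phi_h\|_{\partial\Omega_\delta}^2 \lesssim \|\phi_h\|_{\partial\Omega}^2 + h\|\nabla\phi_h\|_\Omega^2 \lesssim \|\psi\|_{\partial\Omega}^2$ from the bounds above.

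The Lagrange-multiplier variant~\eqref{eq:stabdiscretelagrange} is entirely analogous: test~\eqref{eq:discrete-dual-problem-lagrange} with $(v,\mu)=(\phi_h,\theta_h)$, use the inf-sup/coercivity pair~\eqref{eq:infsup}--\eqref{eq:continuity-lm} — more precisely the inf-sup bound applied with the supremum realized (up to a constant) at $(\phi_h,\theta_h)$, combined with the consistency of $c_h$ — to get $\tn(\phi_h,\theta_h)\tn^2 \lesssim m_{\psi,h}(\theta_h) = (\psi,\theta_h)_{\partial\Omega} \leqslant \|\psi\|_{\partial\Omega}\|\theta_h\|_{\partial\Omega} \leqslant \tfrac12 h\|\theta_h\|_{\partial\Omega}^2 + C h^{-1}\|\psi\|_{\partial\Omega}^2$; since $\tn\theta_h\tn^2 = h\|\theta_h\|_{\partial\Omega}^2$, absorb and multiply by $h$. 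The term $h^2\|\theta_h\|_{\partial\Omega}^2$ then replaces $h^2\|n\cdot\nabla\phi_h\|_{\partial\Omega}^2$, and the $\kappa$-term and the $\sup_\delta$-term are handled exactly as above. \textbf{The main obstacle} I anticipate is the treatment of $\sup_{0\leqslant\delta\leqslant\delta_h}\|\phi_h\|_{\partial\Omega_\delta}^2$: the naive combination of the trace inequality with the global $\|\phi_h\|_\Omega$ bound loses a power of $h$, so one genuinely needs to exploit that $\delta_h=O(h)$ — i.e. that $\partial\Omega_\delta$ stays inside the single layer $\mcT_{\partial\Omega_{\delta_h}}$ of boundary elements — and bound the $L^2$-norm of $\phi_h$ over that thin band by its boundary trace plus $h^{1/2}\|\nabla\phi_h\|_\Omega$, which is the only place the geometry of the $h$-band~\eqref{eq:h-band} and the measure estimate $|\mcT_{\partial\Omega_\delta}|_d\approx h|\partial\Omega_\delta|_{d-1}$ are really used.
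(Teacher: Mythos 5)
Your overall strategy --- test with $\phi_h$ resp.\ $(\phi_h,\theta_h)$, invoke coercivity resp.\ the inf-sup condition, apply Cauchy--Schwarz and Young to the boundary terms of $m_{\psi,h}$, multiply through by $h$, and then control $\sup_{0\leqslant\delta\leqslant\delta_h}\|\phi_h\|^2_{\partial\Omega_\delta}$ by exploiting that the band has thickness $O(h)$ --- is exactly the paper's proof. In particular the inequality you ultimately land on for the sup-term, $\sup_\delta\|\phi_h\|^2_{\partial\Omega_\delta}\lesssim\|\phi_h\|^2_{\partial\Omega}+h\|\nabla\phi_h\|^2_\Omega$, is the paper's \eqref{eq:stabc}; the paper gets it in one line from the normal-direction estimate \eqref{eq:taylor-expansion} rather than via element-wise trace inequalities, but the mechanism is the same and your version works.

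The one genuine defect is your treatment of $h\kappa\|\phi_h\|^2_\Omega$. The lemma concerns the solution of the $\kappa$-perturbed dual problem \eqref{eq:duallambda}, not of \eqref{eq:discrete-dual-problem-nitsche}: testing \eqref{eq:duallambda} with $v=\phi_h$ gives $a_h(\phi_h,\phi_h)+\kappa\|\phi_h\|^2_\Omega=m_{\psi,h}(\phi_h)$, so the $\kappa$-weighted $L^2$-term appears on the left-hand side with a favorable sign and the resulting constant in \eqref{eq:stabaa} is independent of $\kappa$. Your reading (``a constant depending on $\kappa$ is tolerated, or $\kappa$ is understood to be fixed'') and the Poincar\'e detour yield only $h\kappa\|\phi_h\|^2_\Omega\lesssim\kappa\,\|\psi\|^2_{\partial\Omega}$, i.e.\ a constant that grows linearly in $\kappa$. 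That is not acceptable here: in the proof of Proposition~\ref{prop:stab-kappa} this lemma is used precisely in a regime where $\kappa$ is afterwards chosen \emph{large} to absorb $\|\phi_h\|^2_{\Omega_{\deltahone}}$ into the left-hand side, and a $\kappa$-dependent constant would make that absorption circular. The fix is simply to test the equation the lemma actually refers to; the same remark applies to the Lagrange-multiplier case, where \eqref{eq:duallambdalagrange} likewise carries the perturbation $\kappa(v,\phi_h)$ and supplies the term for free.
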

\begin{proof}
We note that
the estimate
\begin{equation}\label{eq:stabaa}
h \tn \phi_h \tn^2 =
h \|\nabla \phi_h \|_\Omega^2 + h \kappa \| \phi_h \|_\Omega^2
+ h^2 \|n\cdot \nabla \phi_h \|^2_{\partial \Omega}
+ \|\phi_h\|_{\partial \Omega}^2  \lesssim \| \psi \|_{\partial \Omega}^2
\end{equation}
follows directly by setting $v=\phi_h$ in (\ref{eq:duallambda}),
using coercivity (\ref{eq:coercivity}),
and multiplying by $h$.
Furthermore, with
$0\leqslant \delta \leqslant \delta_0$ we find that
\begin{align}
  \label{eq:taylor-expansion}
  \| \phi_h \|^2_{\partial \Omega_\delta}
  \lesssim\| \phi_h \|^2_{\partial \Omega} + \delta \| \nabla \phi_h \|^2_{\Omega\setminus\Omega_\delta}
\end{align}
Thus for $0\leqslant \delta \leqslant \delta_h \lesssim h$, we have
\begin{equation}
  \label{eq:stabc}
  \sup_{0 \leqslant \delta \leqslant \delta_h} \| \phi_h \|^2_{\partial \Omega_{\delta_h}}
  \lesssim
  \| \phi_h \|^2_{\partial \Omega} + h \| \nabla \phi_h \|^2_{\Omega\setminus\Omega_{\delta_h}}
  \lesssim
  \| \psi\|^2_{\partial \Omega}
\end{equation}
Combining (\ref{eq:stabaa}) and (\ref{eq:stabc}) we arrive at the
desired estimate.

The second estimate~\eqref{eq:stabdiscretelagrange} can be shown
similarly.
Setting $(v,\mu) = (\phi_h,\theta_h)$ in
(\ref{eq:discrete-dual-problem-lagrange}),
using the inf-sup condition (\ref{eq:infsup}) and multiplying with
$h$, we directly obtain
\begin{equation}
h \tn (\phi_h, \theta_h)\tn^2 \lesssim \| \psi \|^2_{\partial \Omega}
\end{equation}
In particular, we have
\begin{equation}
h \|\nabla \phi_h \|_\Omega^2
+ h^2 \| \theta_h \|^2_{\partial \Omega}
+ h \kappa \| \phi_h \|_\Omega^2
+ \|\phi_h\|^2_{\partial \Omega} \lesssim \| \psi \|^2_{\partial \Omega}
\end{equation}
and using the estimate~\eqref{eq:taylor-expansion} once more, we arrive at the
desired estimate.
\end{proof}
\begin{proposition}
  \label{prop:stab-kappa}
  If $\phi_h \in V_h$ satisfies
  \begin{equation}
    \label{eq:duallambda}
    a_h(v,\phi_h) + \kappa(v,\phi_h) = m_{\psi,h}(v)\quad \forall v \in V_h
  \end{equation}
  or $(\phi_h, \theta_h) \in V_h \times \Lambda_h$ satifies
  \begin{equation}
    \label{eq:duallambdalagrange}
    A_h(v,\mu;\phi_h,\theta_h) + \kappa(v,\phi_h)
    = m_{\psi,h}(v,\mu)\quad \forall (v,\mu) \in V_h \times \Lambda_h
  \end{equation}
  with a constant large enough parameter $\kappa>0$. Then, in both cases,
  $\phi_h$ satisfies the stability estimate
  \begin{equation}
    \| \nabla \phi_h \|^2_{\rho_\deltahone,\Omega} + h \| \nabla \phi_h \|^2_\Omega
    + \sup_{0 \leqslant \delta \leqslant \delta_0} \| \phi_h \|^2_{\partial \Omega_\delta}
    + \| \phi_h \|_{\Omega}^2
    \lesssim \| \psi \|^2_{\partial \Omega}
  \end{equation}
\end{proposition}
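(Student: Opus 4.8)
The plan is to construct an explicit weighted test function that vanishes on all elements touching $\partial\Omega$, insert it into the perturbed discrete dual problem, and thereby obtain control of $\|\nabla\phi_h\|_{\rho_{\deltahone},\Omega}^2$ and $\|\phi_h\|_\Omega^2$ away from the boundary. The natural candidate is $w = \rho_{\deltahone}\phi_h$, or rather its Scott--Zhang interpolant $\pi_h w$, since $w \notin V_h$. The key point is that by the choice of $C'$ in~\eqref{eq:deltahone}, $\rho_{\deltahone}$ and hence $w$ and $\pi_h w$ vanish on every element with a face on $\partial\Omega$, so all the boundary terms in $a_h(\cdot,\cdot)$ (respectively $A_h(\cdot,\cdot)$) and in $m_{\psi,h}(\cdot)$ drop out when tested against $\pi_h w$. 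Testing~\eqref{eq:duallambda} with $v = \pi_h w$ then yields, schematically,
\begin{equation*}
  (\nabla\phi_h, \nabla\pi_h w)_\Omega + \kappa(\phi_h, \pi_h w)_\Omega = 0,
\end{equation*}
and I would rewrite the left-hand side by adding and subtracting $w$: $(\nabla\phi_h,\nabla w) + \kappa(\phi_h, w) = (\nabla\phi_h, \nabla(w - \pi_h w)) + \kappa(\phi_h, w - \pi_h w)$.

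Next I would compute the leading term exactly. Since $\nabla w = \rho_{\deltahone}\nabla\phi_h + \phi_h\nabla\rho_{\deltahone}$ and $|\nabla\rho_{\deltahone}| \leqslant 1$ (as $\rho$ is a distance function, using $\rho \in C^2(\Omega_{\deltahone})$ from~\cite[Lemma~14.16]{GilbardTrudinger2001}), one gets
\begin{equation*}
  (\nabla\phi_h, \nabla w)_\Omega = \|\nabla\phi_h\|_{\rho_{\deltahone},\Omega}^2 + (\nabla\phi_h, \phi_h\nabla\rho_{\deltahone})_\Omega,
\end{equation*}
and the cross term is absorbed by $\tfrac12\|\nabla\phi_h\|_{\rho_{\deltahone},\Omega}^2 + C\|\phi_h\|_\Omega^2$ via Cauchy--Schwarz. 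Likewise $\kappa(\phi_h, w)_\Omega = \kappa\|\phi_h\|_{\rho_{\deltahone},\Omega}^2 \geqslant 0$ contributes nonnegatively (so $\kappa$ only helps here). The right-hand side interpolation remainder $w - \pi_h w$ is supported in the $h$-band $\mcT_{\partial\Omega_{\deltahone}}$ where $\rho_{\deltahone} = O(h)$, and by~\eqref{eq:interpest0} together with a local bound on $\nabla\rho_{\deltahone}$, one has $\|\nabla(w-\pi_h w)\|_\Omega \lesssim h^{1/2}\|\nabla\phi_h\|_\Omega$ roughly speaking (the weight gains a power of $h$ on the band); so the remainder is controlled by $\varepsilon\|\nabla\phi_h\|_{\rho_{\deltahone},\Omega}^2$ plus $Ch\|\nabla\phi_h\|_\Omega^2$. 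Feeding in the discrete energy estimate from Lemma~\ref{lem:discrete-energy-stability} to bound $h\|\nabla\phi_h\|_\Omega^2$, $h^2\|n\cdot\nabla\phi_h\|_{\partial\Omega}^2$, $\|\phi_h\|_{\partial\Omega}^2$ and the near-boundary sup term by $\|\psi\|_{\partial\Omega}^2$, I can close the estimate for $\|\nabla\phi_h\|_{\rho_{\deltahone},\Omega}^2 + \|\phi_h\|_\Omega^2$ after hiding the $\varepsilon$-terms on the left. Finally, the term $\sup_{0\leqslant\delta\leqslant\delta_0}\|\phi_h\|_{\partial\Omega_\delta}^2$ is handled by splitting at $\delta = \deltahone$: for $\delta \leqslant \deltahone \lesssim h$ use~\eqref{eq:stabc} from Lemma~\ref{lem:discrete-energy-stability}, and for $\deltahone \leqslant \delta \leqslant \delta_0$ use a trace/Taylor argument like~\eqref{eq:taylor-expansion} relative to $\Omega_{\deltahone}$, bounding $\|\phi_h\|_{\partial\Omega_\delta}^2 \lesssim \|\phi_h\|_{\partial\Omega_{\deltahone}}^2 + \|\nabla\phi_h\|_{\rho_{\deltahone},\Omega}^2$ since on $\Omega_{\deltahone}\setminus\Omega_\delta$ the weight $\rho_{\deltahone}$ is bounded below away from $0$ only up to constants — more carefully, integrating $\partial_n(\phi_h^2)$ against $\rho_{\deltahone}$ in the normal direction gives exactly the weighted gradient norm. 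The Lagrange multiplier case~\eqref{eq:duallambdalagrange} is identical: testing with $(\pi_h w, 0)$ kills $b(\theta_h,\pi_h w)$ and the stabilization $c_h$ on the band, leaving the same volume identity, and the energy input now comes from~\eqref{eq:stabdiscretelagrange}.

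The main obstacle I anticipate is the careful bookkeeping of the interpolation remainder against the degenerate weight: one must verify that $\|\rho_{\deltahone}^{-1/2}(w - \pi_h w)\|$-type quantities, or rather the unweighted $\|\nabla(w - \pi_h w)\|$ localized to each element $T$, genuinely scale so that the remainder is a \emph{strict} fraction of $\|\nabla\phi_h\|_{\rho_{\deltahone},\Omega}^2$ plus a multiple of the already-controlled $h\|\nabla\phi_h\|_\Omega^2$ — in particular on the layer of elements where $\rho_{\deltahone}$ transitions from $0$ to $O(h)$, where $\nabla\rho_{\deltahone}$ is $O(1)$ and the Scott--Zhang patch $\omega(T)$ may straddle the zero set. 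Getting the element-wise estimate $\|\nabla(w - \pi_h w)\|_T \lesssim h\,\|\nabla(\rho_{\deltahone}\phi_h)\|_{\omega(T)} + \ldots$ and then summing with the correct weight, using $\rho_{\deltahone} \lesssim \rho_{\deltahone} + h$ and quasi-uniformity, is where the real work lies; everything else is Cauchy--Schwarz and invoking Lemma~\ref{lem:discrete-energy-stability}. A secondary subtlety is that $\kappa$ must be taken large enough only to guarantee coercivity of $a_h + \kappa(\cdot,\cdot)$ and the inf-sup stability of the perturbed saddle-point form uniformly in $h$ — this is what makes Lemma~\ref{lem:discrete-energy-stability} applicable — but it does not otherwise enter the weighted argument.
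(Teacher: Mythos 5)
Your overall architecture coincides with the paper's: test the perturbed problem with the interpolant of $\rho\,\phi_h$, use that it vanishes on all boundary elements so the boundary terms in $a_h$ and $m_{\psi,h}$ drop out, split into the exact weighted term plus an interpolation remainder on the transition band, and close with Lemma~\ref{lem:discrete-energy-stability}. However, there is a genuine gap at the central step. You claim the cross term $(\nabla\phi_h,\phi_h\nabla\rho_{\deltahone})_\Omega$ is ``absorbed by $\tfrac12\|\nabla\phi_h\|^2_{\rho_{\deltahone},\Omega}+C\|\phi_h\|^2_\Omega$ via Cauchy--Schwarz.'' It is not: the weighted Cauchy--Schwarz that produces the factor $\|\nabla\phi_h\|_{\rho_{\deltahone},\Omega}$ forces the companion factor $\|\rho_{\deltahone}^{-1/2}\phi_h\|_{\Omega_{\deltahone}}$, and $\rho_{\deltahone}^{-1}$ is not integrable near $\partial\Omega_{\deltahone}$, so this is not bounded by $\|\phi_h\|_\Omega$; the unweighted alternative $\|\nabla\phi_h\|_\Omega\|\phi_h\|_\Omega$ is only controlled as $h^{-1/2}\|\psi\|_{\partial\Omega}\|\phi_h\|_\Omega$ by the energy estimate and cannot be absorbed either (a dyadic splitting of $\Omega_{\deltahone}$ only improves this to a $|\ln h|^{1/2}$ loss, which still defeats the absorption). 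The paper's fix is the identity $\phi_h\nabla\phi_h=\tfrac12\nabla(\phi_h^2)$ followed by Green's formula: the cross term becomes $-\tfrac12(\phi_h^2,\Delta\rho_\delta)_{\Omega_\delta}+\tfrac12(\phi_h^2,n\cdot\nabla\rho_\delta)_{\partial\Omega_\delta}$, i.e.\ a term $\lesssim\|\phi_h\|^2_{\Omega_\delta}$ plus a boundary term on $\partial\Omega_\delta$ which, since $|n\cdot\nabla\rho_\delta|\geqslant c>0$, is precisely what delivers the control of $\sup_{\deltahone\leqslant\delta\leqslant\delta_0}\|\phi_h\|^2_{\partial\Omega_\delta}$; this is also why the paper runs the argument for the whole family $\rho_\delta$, $\deltahone\leqslant\delta\leqslant\delta_0$, rather than for $\rho_{\deltahone}$ alone. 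Your proposed substitute for the sup term (integrating $\partial_n(\phi_h^2)$ in the normal direction) runs into exactly the same degenerate weight.

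Relatedly, you misidentify the role of $\kappa$: $a_h$ is already coercive for $\beta$ large, so $\kappa$ is not needed for coercivity. It is needed because the Green's-formula step leaves $\|\phi_h\|^2_{\Omega_{\deltahone}}$ on the right-hand side (from the $\Delta\rho_\delta$ term, and again when one writes $\|\phi_h\|^2_\Omega\lesssim d\sup_{0\leqslant\delta\leqslant d}\|\phi_h\|^2_{\partial\Omega_\delta}+d^{-1}\|\phi_h\|^2_{\rho,\Omega_d}$), and this is absorbed into $\kappa\|\phi_h\|^2_{\rho_{\deltahone},\Omega}$ on the left only because $\kappa$ is large; without that mechanism your sketch has no way to place $\|\phi_h\|^2_\Omega$ in the final estimate. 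Two minor points: the paper uses the Lagrange interpolant, for which $\rho_{\deltahone}=0$ at the vertices of boundary elements immediately gives $I_h(\rho_\delta\phi_h)=0$ there, whereas Scott--Zhang would require enlarging the zero layer so the averaging patches stay inside it; and your treatment of the interpolation remainder on the transition band is the right idea and matches the paper's elementwise Green's formula combined with trace and inverse estimates.
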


\begin{proof}
  First, we note that discrete energy stability estimate
  provides sufficient control for $\delta_h \lesssim h$.
  Let now $\delta_h$ be chosen
  such that $0 < \deltahone < \delta_h$. Choosing the test function
  \begin{equation}
    v = I_h (\rho_\delta \phi_h) = \rho_\delta \phi_h +( I_h - I )\rho_\delta \phi_h, \quad \deltahone \leqslant \delta \leqslant \delta_0
  \end{equation}
  where $I_h$ is the Lagrange interpolant, in (\ref{eq:duallambda}) we obtain
  the identity
  \begin{align}
    0&=a_h(\phi_h,I_h(\rho_\delta \phi_h)) + \kappa (\phi_h,I_h(\rho_\delta \phi_h))_\Omega
    \\ \nonumber
    &= (\nabla \phi_h, \nabla I_h (\rho_\delta \phi_h))_\Omega
    + \kappa (\phi_h,I_h(\rho_\delta \phi_h))_\Omega
    \\ \nonumber
    &=
    \underbrace{(\nabla \phi_h, \nabla(I_h - I) (\rho_\delta \phi_h)) +
    \kappa (\phi_h,(I_h-I)(\rho_\delta \phi_h))_\Omega}_{I}
    \\ \nonumber
    &\qquad + \underbrace{(\nabla \phi_h, \nabla (\rho_\delta \phi_h) ) +
  \kappa (\phi_h,\rho_\delta \phi_h)_\Omega}_{II}
  \\
  &=I + II
\end{align}
Note that, due to our choice of $\deltahone$, $I_h  (\rho_\delta
\phi_h) = 0$ on all elements with a face on $\partial \Omega$ and thus
$m_{\psi}(\cdot)$ and the boundary terms in $a_h(\cdot,\cdot)$ and
vanish.
\paragraph{\bf Term $\bfI$} We divide the set of elements in the mesh $\mcT_h$
into three disjoint subsets
\begin{align*}
  \mcT_{0} &= \{T \in \mcT_h : \text{$\rho_{\delta}=0$ on $T$} \}
  \\
  \mcT_{\Omega_\delta} &= \{T \in \mcT_h : T \subset \text{supp}(\rho_{\delta})\}
  \\
  \mcT_{\partial \Omega_\delta} &= \mcT_h \setminus (\mcT_0 \cup \mcT_{\Omega_\delta})
\end{align*}
For each element, term $I$ can be estimated in the following way:
\\
\noindent
$T \in \mcT_{0}$:\;
Clearly $(\nabla \phi_h,\nabla  (I_h - I)(\rho_{\delta} \phi_h ))_K=0$.
\\
\noindent
$T \in \mcT_{\Omega_\delta}$:\;
Using a standard interpolation
error estimate for the Lagrange interpolant, we conclude that
\begin{align}
  |(\nabla \phi_h,\nabla  (I_h - I)(\rho_{\delta} \phi_h ))_T|
  &\lesssim h \|\nabla \phi_h \|_{T}
  | \rho_{\delta} \phi_h |_{2,T}
  \nonumber
  \\
  &\lesssim h \|\nabla \phi_h \|_{T}
  \| \rho_{\delta} \|_{W^{2,\infty}(T)} \| \phi_h \|_{H^1(T)}
  \nonumber
  \\ \label{eq:termIa}
  &\lesssim h \Big( \|\nabla \phi_h \|^2_{T} + \|\phi_h \|_T^2 \Big)
  \quad \forall T \in \mcT_{\Omega_\delta}
\end{align}
$T \in \mcT_{\partial\Omega_\delta}$:\;
In this case
$\nabla \rho_\delta$ is discontinuous in $T$ and to deal with this
difficulty, we use Green's formula as follows
\begin{align}
  |(\nabla \phi_h,\nabla  (I_h - I)(\rho_\delta \phi_h ))_T|
  &= |(n\cdot \nabla \phi_h, (I_h - I)(\rho_\delta \phi_h ))_{\partial T}|
  \nonumber
  \\
  &\lesssim \| n\cdot \nabla \phi_h\|_{\partial T} \|(I_h - I)(\rho_\delta \phi_h )\|_{\partial T}
  \nonumber
  \\
  &\lesssim h^{-1/2} \| \nabla \phi_h\|_{T} \|(I_h - I)(\rho_\delta \phi_h )\|_{\partial T}
  \nonumber
  \\
  &\lesssim h^{1/2} \| \nabla \phi_h\|_{T}
  \|\nabla (\rho_\delta \phi_h) \|_{\partial T}
  \nonumber
  \\ \label{eq:termIb}
  &\lesssim \epsilon^{-1} h \| \nabla \phi_h\|^2_{T}  +
  \epsilon \|\nabla (\rho_\delta \phi_h) \|^2_{\partial T}
\end{align}
for each $\epsilon>0$. Here we used an inverse inequality and the interpolation estimate
\begin{equation*}
  \| v - I_v v \|_F \lesssim h \|\nabla_F v \|_F \lesssim \|\nabla v \|_F
\end{equation*}
on each of the faces $F\subset\partial T$ of element $T$. Here, $\nabla_F$
is the tangent gradient $\nabla_F v = P_F \nabla$ associated with the
face $F$ and $P_F = I - n_F \otimes n_F$, where $n_F$ is the unit normal to
$F$, the projection onto the tangent space of $F$.

Now $\|\nabla (\rho_\delta \phi_h) \|_{\partial T}$ can be estimated
by observing that
$\| \rho_\delta \|_{L^\infty(\partial T)}\lesssim h$ since
$T \in \mcT_{\partial \Omega_\delta}$.
Using H\"older's inequality, we
have
\begin{align}
  \|\nabla (\rho_\delta \phi_h) \|^2_{\partial T}
  &\lesssim
  \|\nabla \rho_\delta\|^2_{L^\infty(\partial T)} \| \phi_h \|^2_{\partial T}
  +  \| \rho_\delta \|^2_{L^\infty(\partial T)} \|\nabla \phi_h \|^2_{\partial T}
  \nonumber
  \\
  &\lesssim \|\phi_h \|^2_{\partial T} +  h \|\nabla \phi_h \|^2_{\partial T}
  \nonumber
  \\
  &\lesssim \Big( h^{-1}\|\phi_h \|^2_{T} + h \|\nabla \phi_h\|_T^2\Big)
  +  h \|\nabla \phi_h \|^2_{T}
  \nonumber
  \\ \label{eq:termIc}
  &\lesssim h^{-1}\|\phi_h \|^2_{T} +  h \|\nabla \phi_h\|_T^2
\end{align}
where we again used a trace inequality and an inverse estimate.

Combining
(\ref{eq:termIb}) and (\ref{eq:termIc}),  we thus have
\begin{equation}
  \label{eq:termId}
  |(\nabla \phi_h,\nabla  (I_h - I)(\rho_\delta \phi_h ))_T|
  \lesssim \epsilon h^{-1}\|\phi_h \|^2_{T} +  \epsilon^{-1} h \|\nabla \phi_h\|_T^2\quad \forall
  T \in \mcT_{\partial \Omega_\delta}
\end{equation}
for all $0< \epsilon \leqslant 1$.
Summing over the elements and using (\ref{eq:termIa}) and
(\ref{eq:termId}), we obtain
\begin{align}
  |I| &\lesssim \sum_{T \in {\mcT_{\Omega_\delta}}}
  h \Big( \| \nabla \phi_h \|^2_{T} + \| \phi_h \|^2_T \Big)
  +  \sum_{T \in {\mcT_{\partial \Omega_\delta}}}
  \Big( \epsilon h^{-1} \| \phi_h \|^2_{T} + \epsilon^{-1} h \| \nabla \phi_h \|^2_{T} \Big)
  \nonumber
  \\
  &\lesssim\epsilon^{-1} \sum_{T \in {\mcT_{h}}}
  h \Big( \| \nabla \phi_h \|^2_{T} + \| \phi_h \|^2_T \Big)
  + \epsilon  \sum_{T \in {\mcT_{\partial \Omega_\delta}}} \sup_{0\leqslant d \leqslant \delta_0} {\| \phi_h \|^2_{T\cap {\partial \Omega_d}}}
  \nonumber
  \\ \label{eq:termIe}
  &\lesssim \epsilon^{-1} \|\psi\|^2_{\partial \Omega}
  +  \epsilon \sup_{\delta_h\leqslant d \leqslant \delta_0} \|\phi_h\|^2_{\partial \Omega_d}
\end{align}
for all $0< \epsilon \leqslant 1$.

\paragraph{\bf Term $\bfI\bfI$} An application of Green's formula gives
the following identity
\begin{align}
  II&=(\nabla \phi_h, \nabla(\rho_{\delta} \phi_h))_{\Omega_\delta}
  + \kappa (\rho_\delta \phi_h, \phi_h)_{\Omega_\delta}
  \nonumber
  \\
  &=(\rho_{\delta}\nabla \phi_h,\nabla \phi_h)_{\Omega_\delta}
  + ( \phi_h \nabla \phi_h, \nabla \rho_\delta)_{\Omega_\delta}
  + \kappa (\rho_\delta \phi_h,\phi_h)_{\Omega_\delta}
  \nonumber
  \\
  &=(\rho_\delta \nabla \phi_h,\nabla \phi_h)_{\Omega_\delta}
  + \kappa (\rho_\delta \phi_h,\phi_h)_{\Omega_\delta}
  - \frac{1}{2}( \phi_h^2,\Delta \rho_\delta)_{\Omega_\delta}
  + \frac{1}{2}(\phi_h^2, (n \cdot \nabla \rho_\delta))_{\partial \Omega_\delta}
  \nonumber
\end{align}
We thus obtain the estimate
\begin{align}
  \|\nabla \phi_h\|^2_{\rho_\delta,\Omega}
  + \kappa \|\rho_\delta \phi_h\|^2_{\Omega}
  &\lesssim \|\phi_h\|^2_{\Omega_\delta} \|\Delta \rho_\delta\|_{L^\infty(\Omega_\delta)}
  +  \|\phi_h\|^2_{\partial \Omega_\delta} \| \nabla \rho_{\delta}\|_{L^\infty(\partial \Omega_\delta)} + |I|
\label{eq:termIIb}
  \\
  &\lesssim \|\phi_h\|^2_{\Omega_\delta}
  +  \|\phi_h\|^2_{\partial \Omega_\delta}
 \nonumber
  \\
  &\qquad  +   \epsilon^{-1} \|\psi\|^2_{\partial \Omega}
  +  \epsilon \sup_{\delta_h\leqslant d \leqslant \delta_0} \|\phi_h\|^2_{\partial \Omega_d}
 \label{eq:termIIc}
  \\
 \nonumber
  &\lesssim \|\phi_h\|^2_{\Omega_\delta}
  +   \epsilon^{-1} \|\psi\|^2_{\partial \Omega}
  +  \epsilon \sup_{\delta_h\leqslant d \leqslant \delta_0} \|\phi_h\|^2_{\partial \Omega_d}
\end{align}
for $\deltahone \leqslant \delta \leqslant \delta_h$.
Here we used the estimate (\ref{eq:termIe}) for Term $I$ in (\ref{eq:termIIb}) and the estimate (\ref{eq:stabc}) to bound $\|\phi_h\|^2_{\partial \Omega_\delta}$
for $\deltahone \leqslant \delta \leqslant \delta_h$ in (\ref{eq:termIIc}).
Thus, letting $\delta \rightarrow \deltahone$ we obtain
\begin{equation}
  \label{eq:staba}
  \| \nabla \phi_h \|^2_{\rho_\deltahone,\Omega} +\kappa \| \phi_h \|^2_{\rho_\deltahone,\Omega}
  \lesssim
  \|\phi_h\|^2_{\Omega_{\deltahone}}
  +   \epsilon^{-1} \|\psi\|^2_{\partial \Omega}
  +  \epsilon \sup_{\delta_h\leqslant d \leqslant \delta_0} \|\phi_h\|^2_{\partial \Omega_d}
\end{equation}
Using the fact $|n \cdot \nabla \rho | \geqslant c > 0$ for $\delta_0$
small enough, we also obtain
the bound
\begin{align}
  \nonumber
  \sup_{\deltahone \leqslant \delta \leqslant \delta_0} \| \phi_h \|^2_{\partial \Omega_\delta}
  &\lesssim \|\nabla \phi_h \|^2_{\rho_\deltahone,\Omega}
  +  \kappa \| \phi_h \|^2_{\rho_\deltahone,\Omega}
  + \|\phi_h\|^2_{\Omega_{\deltahone}}
  + |I|
  \\ \label{eq:stabbb}
  &\lesssim
   \|\phi_h\|^2_{\Omega_{\deltahone}}
  +   \epsilon^{-1} \|\psi\|^2_{\partial \Omega}
  +  \epsilon \sup_{\delta_h\leqslant d \leqslant \delta_0} \|\phi_h\|^2_{\partial \Omega_d}
\end{align}
where we used (\ref{eq:termIe}) and (\ref{eq:staba}) in the second
inequality. Choosing an appropriate $\epsilon$ and combining
(\ref{eq:staba}) and (\ref{eq:stabbb}), we arrive at
\begin{align}
  \label{eq:stabcc}
  &\| \nabla \phi_h \|^2_{\rho_\deltahone,\Omega}
  + \kappa \|\phi_h \|^2_{\rho_\deltahone,\Omega}
  +\sup_{\deltahone \leqslant \delta \leqslant \delta_0} \| \phi_h \|^2_{\partial \Omega_\delta}
  \lesssim
   \|\phi_h\|^2_{\Omega_{\deltahone}}
  + \|\psi\|^2_{\partial \Omega}
\end{align}
To conclude the proof, we first note that
$
\|\phi_h\|^2_{\Omega}
$
can be estimated by
\begin{align}
  \|\phi_h\|^2_{\Omega} &= \|\phi_h\|^2_{\Omega\setminus \Omega_d} +  \|\phi_h\|^2_{\Omega_d}
  \\
  &\lesssim d \sup_{0\leqslant \delta \leqslant d} \| \phi_h \|^2_{\partial \Omega_\delta}
  + d^{-1}\|\phi_h\|^2_{\rho, \Omega_d}
\end{align}
Applying the same argument for the domain $\Omega_{\deltahone}$ and the
shifted distance function $ \rho_{\deltahone}$, we note that by
choosing $\deltahone < d \leqslant \delta_0$ small enough and $\kappa$ large
enough, the term $\|\phi_h\|^2_{\Omega_{\deltahone}}$ can be absorbed
in the left hand side of (\ref{eq:stabcc}).
Thus we finally have the estimate
\begin{equation}\label{eq:stabweighted}
  \| \nabla \phi_h \|^2_{\rho_\deltahone,\Omega}
  + \kappa \|\phi_h \|^2_{\rho_\deltahone,\Omega}
  +\sup_{\deltahone \leqslant \delta \leqslant \delta_0} \| \phi_h \|^2_{\partial \Omega_\delta}
  \lesssim
  \|\psi\|^2_{\partial \Omega}
\end{equation}
The proof now follows from combining \eqref{eq:stabdiscrete} and
\eqref{eq:stabweighted}.
\end{proof}

We are now in the position to finalize the proof of
of Proposition~\ref{prop:stab}:
\begin{proof}{(Proposition \ref{prop:stab})}
  We decompose the solution $\phi_h$ to (\ref{eq:discrete-dual-problem-nitsche}) into a sum
  $\phi_h = \phi_{h,0} + \phi_{h,1}$ where
  \begin{equation}\label{eq:phih1}
    a_h(\phi_{h,1},v) + \kappa (\phi_{h,1},v) = m_{\psi,h}(v) \quad \forall v \in V_h
  \end{equation}
  and
  \begin{equation}\label{eq:phih0}
    a_h(\phi_{h,0},v) = \kappa (\phi_{h,1},v) \quad \forall v \in V_h
  \end{equation}
  Setting $v = \phi_{h,0}$ in (\ref{eq:phih0}) we find that
  \begin{equation}
    \tn \phi_{h,0} \tn^2 \lesssim \kappa \|\phi_{h,1}\|_{\Omega} \|\phi_{h,0}\|_{\Omega}
    \lesssim \| \psi \|_{\partial \Omega} \tn \phi_{h,0} \tn
  \end{equation}
  where we used Cauchy-Schwarz, Poincar\'e, and Proposition
  \ref{prop:stab-kappa}.
  Thus
  \begin{equation}
    \|\nabla \phi_{h,0} \|_\Omega^2 + h^{-1}\|\phi_{h,0} \|_{\partial \Omega}^2
    \lesssim
     \| \psi \|^2_{\partial \Omega}
  \end{equation}
  Using this estimate, we also obtain
  \begin{equation}
    \sup_{0\leqslant \delta \leqslant \delta_0} \| \phi_{h,0} \|^2_{\partial \Omega_\delta}
    \lesssim \|\psi \|^2_{\partial \Omega}
  \end{equation}
  Collecting the estimates we conclude that the estimate
  \begin{equation}\label{eq:phih0est}
    \|\nabla \phi_{h,0} \|_\Omega^2 + \sup_{0\leqslant \delta \leqslant \delta_0} \| \phi_{h,0} \|^2_{\partial \Omega_\delta}
    \lesssim \|\psi \|^2_{\partial \Omega}
  \end{equation}
  holds. Observing that this estimate is stronger compared to the
  desired estimate and the triangle inequality, the
  estimate (\ref{eq:phih0est}) for $\phi_0$ and the estimate
  for $\phi_{h,1}$ given by Proposition
  \ref{prop:stab-kappa} we obtain the desired result.
\end{proof}
\begin{figure}[htb]
  \centering
    \def\svgwidth{0.48\textwidth}
    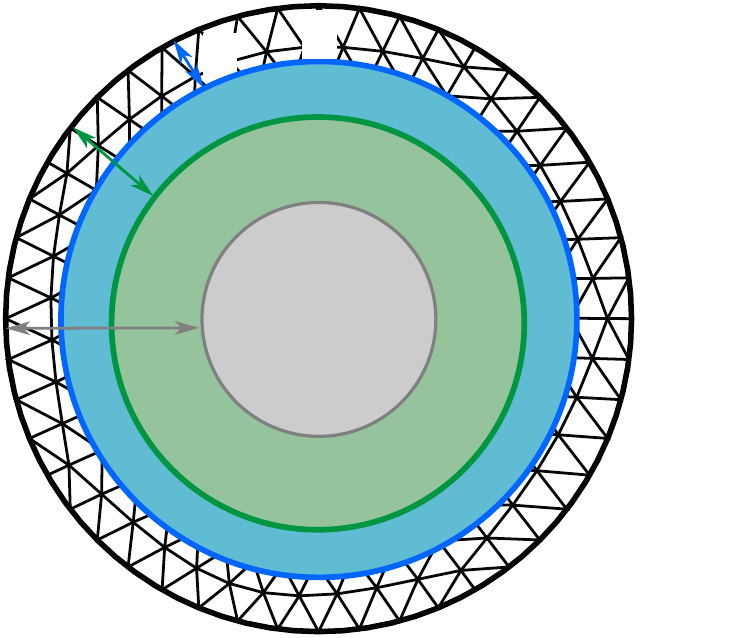
    \vspace{0.02\textwidth}
    \def\svgwidth{0.45\textwidth}
    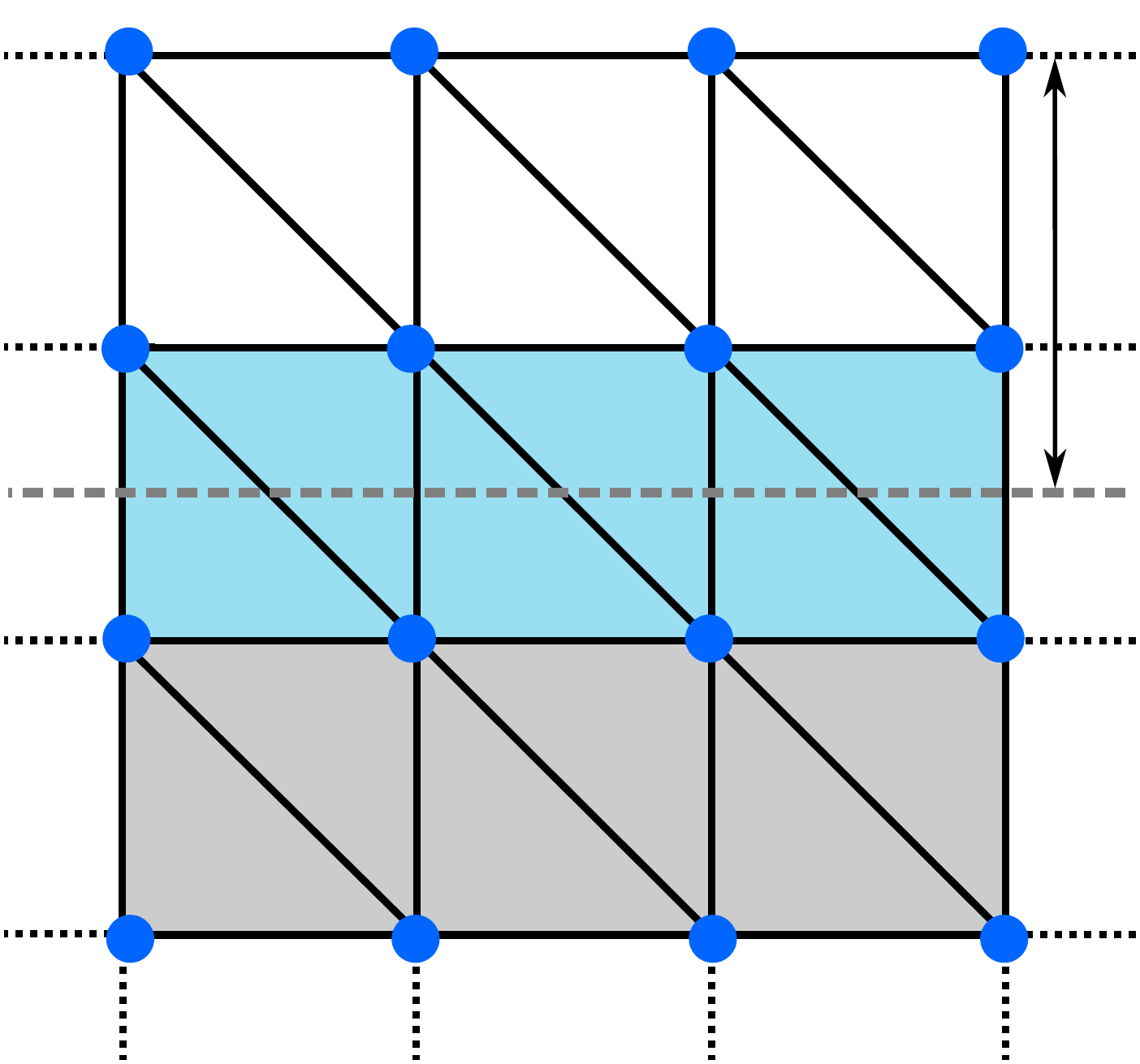
    \caption{(Left) Decomposition of the domain $\Omega$ into 
      two boundary layers of width $h$ and the ``far-field''
      domain $\Omega_{\delta_0}$.
      (Right) Decomposition of the mesh $\mcT$ with respect
      to $\partial \Omega_{\delta}$ consisting of
      a $h$-band $\mcT_{\partial \Omega_{\delta}}$ (blue), an inner
      mesh $\mcT_{\Omega_{\delta}}$ (gray) and a boundary zone
      $\mcT_0$ (white).}
    \label{fig:regions}
\end{figure}

\section{$L^{2}$ Error Estimates for the Boundary
Flux}
\label{sec:error-estimates}
The previous results on the weighted stability estimate and the anisotropic
interpolation error enable us to prove the main result of our work:
\begin{theorem} Let $\Sigma_n$ be the discrete boundary flux defined
  by either~\eqref{eq:flux-definition-nitsche-case} or
  ~\eqref{eq:flux-definition-lm-case} and suppose $u$ satisfies
  the assumption of Proposition~\ref{prop:interpest-anis}.
  Then the following error estimate holds
\begin{equation}
  \label{eq:flux-estimate}
\|\sigma_n - \Sigma_n\|_{\partial \Omega} \lesssim |\ln h | h
\end{equation}
\end{theorem}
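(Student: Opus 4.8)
The plan is to combine the error representation formulas (Lemma~\ref{lemma:errorrep} for Nitsche and Lemma~\ref{lem:errorrep-lm} for the Lagrange multiplier method) with the weighted stability bound of Proposition~\ref{prop:stab} and the anisotropic interpolation estimate of Proposition~\ref{prop:interpest-anis}. Focusing on the Nitsche case first, the starting point is \eqref{eq:errorrep}: it suffices to bound $|a_h(u - \pi_h u, \phi_h)|$ and $|m_{\psi,h}(u - \pi_h u)|$ by $C |\ln h|\, h\, \|\psi\|_{\partial\Omega}$. Expanding $a_h$ according to \eqref{eq:ah}, the interior term $(\nabla(u-\pi_h u),\nabla\phi_h)_\Omega$ is the delicate one and the boundary terms $(n\cdot\nabla(u-\pi_h u),\phi_h)_{\partial\Omega}$, $(n\cdot\nabla\phi_h, u-\pi_h u)_{\partial\Omega}$, $h^{-1}(u-\pi_h u,\phi_h)_{\partial\Omega}$, together with $m_{\psi,h}(u-\pi_h u)$, are treated using the $\partial\Omega$-restricted interpolation estimate \eqref{eq:interpest-anis}, the trace and inverse inequalities \eqref{eq:trace-inequality-for-FD}, \eqref{eq:inverse-estimate-for-facets}, and the boundary control $\sup_{0\leqslant\delta\leqslant\delta_0}\|\phi_h\|_{\partial\Omega_\delta}^2 + h\|\nabla\phi_h\|_\Omega^2 \lesssim \|\psi\|_{\partial\Omega}^2$ from \eqref{eq:stabfinal-nitsche}. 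For instance $|(n\cdot\nabla\phi_h, u-\pi_h u)_{\partial\Omega}| \lesssim \|\nabla\phi_h\|_{\partial\Omega}\,\|u-\pi_h u\|_{\partial\Omega} \lesssim (h^{-1}\|\nabla\phi_h\|_\Omega)(h^2\sup_\delta\|\partial^2 u\|_{\partial\Omega_\delta}) \lesssim h^{1/2}\|\psi\|_{\partial\Omega}$, which is more than enough.

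The main obstacle is the interior term $|(\nabla(u-\pi_h u),\nabla\phi_h)_\Omega|$, and this is precisely where the $|\ln h|$ factor enters. The idea is to split $\Omega$ into the dyadic ``onion'' shells used implicitly in Proposition~\ref{prop:stab}: write $\Omega = (\Omega\setminus\Omega_{\delta_h}) \cup \bigcup_{j} (\Omega_{2^{j}\delta_h}\setminus\Omega_{2^{j+1}\delta_h}) \cup \Omega_{\delta_0}$, where $\delta_h \simeq h$ and $j$ ranges so that there are $O(|\ln h|)$ shells. On each shell $R_j := \Omega_{2^j\delta_h}\setminus\Omega_{2^{j+1}\delta_h}$ of width $\simeq 2^j h$, estimate $|(\nabla(u-\pi_h u),\nabla\phi_h)_{R_j}| \leqslant \|\nabla(u-\pi_h u)\|_{R_j}\,\|\nabla\phi_h\|_{R_j}$; on the weighted shell the distance $\rho$ is $\simeq 2^j h$, so $\|\nabla\phi_h\|_{R_j}^2 \lesssim (2^j h)^{-1}\|\nabla\phi_h\|_{\rho_{\delta_h'},\Omega}^2$, while the anisotropic interpolation bound gives $\|\nabla(u-\pi_h u)\|_{R_j}^2 \lesssim h^2 \cdot |R_j|$-type contributions; a careful accounting using \eqref{eq:interpest-anis} (integrating the $\partial\Omega_\delta$-norms across the shell width $2^j h$) yields $\|\nabla(u-\pi_h u)\|_{R_j}^2 \lesssim h^2\, (2^j h)$. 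Multiplying, the product over each shell is $\lesssim h^2 \|\nabla\phi_h\|_{\rho_{\delta_h'},\Omega}^2$ up to the factor lost in Cauchy--Schwarz across shells; summing the $O(|\ln h|)$ shells and using $\|\nabla\phi_h\|_{\rho_{\delta_h'},\Omega}^2 \lesssim \|\psi\|_{\partial\Omega}^2$ from \eqref{eq:stabfinal-nitsche} produces the $|\ln h|\,h$ bound. The innermost region $\Omega_{\delta_0}$ is harmless since there $\rho\simeq 1$ and standard $O(h)$ interpolation applies; the boundary layer $\Omega\setminus\Omega_{\delta_h}$ is handled by the energy term $h\|\nabla\phi_h\|_\Omega^2$.

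For the Lagrange multiplier case the argument runs in parallel: expand $A_h(\pi_h u - u,\pi_h\lambda-\lambda;\phi_h,\theta_h)$ into its constituent terms $a(\pi_h u - u,\phi_h)$, $b(\pi_h\lambda-\lambda,\phi_h)$, $b(\theta_h,\pi_h u - u)$, and the stabilization $c_h$, then bound each using \eqref{eq:stabfinal-lagrange} — in particular $h^2\|\theta_h\|_{\partial\Omega}^2 \lesssim \|\psi\|_{\partial\Omega}^2$ controls the multiplier terms — together with the interpolation estimates \eqref{eq:interpest-anis} for $u$ and the standard \eqref{eq:interpest1} for $\lambda\in H^1(\partial\Omega)$; the consistency of $c_h$ (it vanishes on the exact solution) disposes of the stabilization contribution. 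The term $m_{\psi,h}(\lambda-\pi_h\lambda) = (\psi,\lambda-\pi_h\lambda)_{\partial\Omega}$ is bounded directly by $\|\psi\|_{\partial\Omega}\,\|\lambda-\pi_h\lambda\|_{\partial\Omega} \lesssim h\|\psi\|_{\partial\Omega}\|\lambda\|_{1,\partial\Omega}$. The only place the logarithm appears is again the interior term $a(\pi_h u - u,\phi_h)$, handled by the identical dyadic-shell decomposition. Taking the supremum over $\psi$ in both error representation formulas then yields \eqref{eq:flux-estimate}.
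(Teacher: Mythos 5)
Your overall strategy is the paper's: start from the error representation formulas, estimate the boundary terms with trace/inverse inequalities and the anisotropic interpolation bound, and extract the logarithm from the interior term $(\nabla(u-\pi_h u),\nabla\phi_h)_\Omega$ by splitting $\Omega$ according to distance to the boundary and invoking the weighted stability $\|\nabla\phi_h\|^2_{\rho_{\deltahone},\Omega}\lesssim\|\psi\|^2_{\partial\Omega}$. Your dyadic shells are simply a discretization of the paper's weighted Cauchy--Schwarz in the normal variable, where the factor $\bigl(\int_{\delta_h}^{\delta_0}(s-\deltahone)^{-1}\ds\bigr)^{1/2}\lesssim|\ln h|^{1/2}$ appears directly (the paper in fact obtains $h|\ln h|^{1/2}$ for that term, slightly sharper than the $h|\ln h|$ your shell-summation yields, though both are within the claimed bound). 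The treatment of the boundary layer via $h\|\nabla\phi_h\|^2_\Omega\lesssim\|\psi\|^2$ and of $\Omega_{\delta_0}$ via the unweighted estimate also matches.

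There is, however, a genuine error in your displayed boundary-term estimate. You bound $|(n\cdot\nabla\phi_h,\,u-\pi_h u)_{\partial\Omega}|\lesssim h^{1/2}\|\psi\|_{\partial\Omega}$ and declare this ``more than enough''; it is not --- $h^{1/2}$ is a \emph{worse} rate than $h|\ln h|$, and if that term really only admitted an $O(h^{1/2})$ bound the theorem would fail. The loss comes from replacing $\|n\cdot\nabla\phi_h\|_{\partial\Omega}$ by $h^{-1}\|\nabla\phi_h\|_\Omega$ instead of using the sharp facet inverse estimate \eqref{eq:inverse-estimate-for-facets}, which gives $\|h^{1/2}\,n\cdot\nabla\phi_h\|_{\partial\Omega}\lesssim\|\nabla\phi_h\|_{\Omega}$; combined with $h\|\nabla\phi_h\|^2_\Omega\lesssim\|\psi\|^2_{\partial\Omega}$ (equivalently, with the term $h^2\|n\cdot\nabla\phi_h\|^2_{\partial\Omega}\lesssim\|\psi\|^2_{\partial\Omega}$ already present in the discrete energy stability \eqref{eq:stabdiscrete}) and $\|u-\pi_h u\|_{\partial\Omega}\lesssim h^2$ from \eqref{eq:interpest-anis}, one obtains the needed $O(h)\|\psi\|_{\partial\Omega}$. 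A second, smaller slip occurs in the Lagrange multiplier case: in the error representation the stabilization form appears as $c_h(\pi_h u-u,\pi_h\lambda-\lambda;\phi_h,\theta_h)$, i.e.\ applied to the \emph{interpolation errors} paired with the dual solution, so consistency of $c_h$ (vanishing on the exact solution) does not dispose of it. What is needed is the localized continuity bound \eqref{eq:stab-form-local-cont}, applied together with the interpolation estimates and the controls $h^2\|\theta_h\|^2_{\partial\Omega}+h\|\nabla\phi_h\|^2_\Omega\lesssim\|\psi\|^2_{\partial\Omega}$. With these two repairs your argument goes through and coincides in substance with the paper's proof.
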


\begin{proof}
  We start with the proof of the estimate for the Nitsche
  flux~\eqref{eq:flux-definition-nitsche-case}.
  Recalling the estimate (\ref{eq:errorrep}) in
  Lemma \ref{lemma:errorrep}, we need to estimate
  \begin{equation}
    I = \sup_{\psi \in L^2(\partial\Omega) \setminus \{0\}} \frac{|a_h(u - \pi_h u , \phi_h)|}{\|\psi\|_{\partial \Omega}},
    \quad
    II = \sup_{\psi \in L^2(\partial\Omega) \setminus \{0\}}\frac{|m_{\psi,h}(u - \pi_h u)| }{\|\psi\|_{\partial \Omega}}
  \end{equation}
  \paragraph{\bf Estimate of $\bfI$} We have
  \begin{align}
    a_h(u - \pi_h u, \phi_h) &=(\nabla (u - \pi_h u),\nabla \phi_h)_\Omega
    -(n\cdot \nabla ( u - \pi_h u ),\phi_h)_{\partial \Omega}
    \\ \nonumber
    &\qquad -(u - \pi_h u, n \cdot \nabla \phi_h )_{\partial \Omega}
    +\beta h^{-1}(u - \pi_h u, \phi_h )_{\partial \Omega}
  \end{align}
  The boundary terms may be directly estimated using a trace inequality
  followed by the interpolation error estimate (\ref{eq:interpest-anis}) and the
  stability estimate (\ref{eq:stabfinal-nitsche}). 
  For instance,
  \begin{align}
    |(n\cdot \nabla ( u - \pi_h u ),\phi_h)_{\partial \Omega}|
    \lesssim
    \| n\cdot \nabla ( u - \pi_h u )\|_{\partial \Omega} \|\phi_h \|_{\partial \Omega}
    \lesssim h \|\psi \|_{\partial \Omega}
    \\
    | (u - \pi_h u, n \cdot \nabla \phi_h )_{\partial \Omega} |
    \lesssim
    \| h^{-1} (u - \pi_h u) \|_{\partial \Omega} \| h n \cdot \nabla \phi_h
    \|_{\partial \Omega}
    \lesssim 
    h \|\psi \|_{\partial \Omega}
  \end{align}
  and the other terms may be estimated in the same way.
  To estimate the
  interior term we first split the integral as follows
  \begin{align}
    (\nabla (u - \pi_h u) ,\nabla \phi_h)_{\Omega}
    &=(\nabla (u - \pi_h u),\nabla \phi_h)_{\Omega \setminus \Omega_{\delta_h}}
    \\ \nonumber
    &\qquad +(\nabla (u - \pi_h u),\nabla \phi_h)_{\Omega_{\delta_h}\setminus \Omega_{\delta_0}}
    \\ \nonumber
    &\qquad
    + (\nabla (u - \pi_h u),\nabla \phi_h)_{\Omega_{\delta_0}}
    \\
    &= I_1 + I_2 + I_3
  \end{align}
  \paragraph{\bf Term $\bfI_1$}
  Using Cauchy-Schwarz in the tangent direction and
  H\"olders inequality in the normal direction we have
  \begin{align}
    I&=(\nabla (u - \pi_h u) ,\nabla \phi_h)_{\Omega \setminus \Omega_{\delta_h}}
    \\
    &\lesssim \left( \sup_{0\leqslant s \leqslant \delta_h} \| \nabla(u-\pi_h u)\|_{\partial\Omega_s} \right)
    \int_0^{\delta_h} \|\nabla \phi_h\|_{\partial \Omega_s} \ds
    \\
    &\lesssim \left( \sup_{0\leqslant s \leqslant \delta_h} \| \nabla(u-\pi_h u)\|_{\partial\Omega_s} \right)
    \left(\int_0^{\delta_h} \ds \right)^{1/2}
    \left(\int_0^{\delta_h} \|\nabla \phi_h\|_{\partial \Omega_s}^2 \ds\right)^{1/2}
    \\
    &\lesssim \left( \sup_{0\leqslant s \leqslant \delta_h} \| \nabla (u-\pi_h u)\|_{\partial\Omega_s}\right) \delta_h^{1/2}
    \|\nabla \phi_h\|_{\Omega \setminus \Omega_{\delta_h}}
  \end{align}
  Since $\delta_h = C h$ we may employ Proposition \ref{prop:stab} as follows
  \begin{equation}
    \delta_h
    \|\nabla \phi_h\|^2_{\Omega \setminus \Omega_{\delta_h}}
    \lesssim h \|\nabla \phi_h\|^2_{\Omega \setminus \Omega_{\delta_h}}
    \lesssim h \|\nabla \phi_h\|^2_{\Omega}
    \lesssim \|\psi \|^2_{\partial \Omega}
  \end{equation}
  Using the interpolation error estimate (\ref{eq:interpest-anis}) we
  get the estimate
  \begin{equation}
    |I|\lesssim h  \|\psi \|_{\partial \Omega}
  \end{equation}

  \paragraph{\bf Term $\bfI_2$} Proceeding in the same way as for
  Term $I$ and observing that
  \begin{equation}
    \rho_\deltahone(x) = s - \deltahone, \quad x \in \partial \Omega_s
  \end{equation}
  we get
  \begin{align}
    II&=(\nabla (u - \pi_h u) ,\nabla \phi_h)_{\Omega_{\delta_h}\setminus \Omega_\delta}
    \\
    &\lesssim \left( \sup_{\delta_h \leqslant s \leqslant \delta_0} \|\nabla( u-\pi_h u)\|_{\partial\Omega_s} \right)
    \int_{\delta_h}^{\delta_0} \|\nabla \phi_h\|_{\partial \Omega_s} \ds
    \\
    &\lesssim \left( \sup_{\delta_h \leqslant s \leqslant \delta_0}
  \|\nabla (u-\pi_h u) \|_{\partial\Omega_s} \right)
  \\ \nonumber
  &\qquad \times \left(\int_{\delta_h}^{\delta_0} (s-\deltahone)^{-1}\ds \right)^{1/2}
  \left(\int_{\delta_h}^{\delta_0} (s-\deltahone) \|\nabla \phi_h\|_{\partial \Omega_s}^2 \ds\right)^{1/2}
  \\
  &\lesssim h  |\ln \delta_h|^{1/2}
  \|\nabla \phi_h\|_{\rho_{\deltahone},\Omega \setminus \Omega_{\delta_h}}
  \\
  &\lesssim h  |\ln \delta_h|^{1/2} \| \psi \|_{\partial \Omega}
\end{align}
where we used the interpolation error estimate (\ref{eq:interpest-anis}) and
the stability estimate in Proposition \ref{prop:stab}.

\paragraph{\bf Term $\bfI_3$}
Using Cauchy-Schwarz we obtain
\begin{align}
  I&=(\nabla (u - \pi_h u) ,\nabla \phi_h)_{\Omega \setminus \Omega_\delta}
  \\
  &\lesssim \| \nabla (u-\pi_h u )\|_{\Omega \setminus \Omega_\delta} \delta^{-1/2}
  \|\nabla \phi_h\|_{\rho,\Omega \setminus \Omega_\delta}
\end{align}
which can be directly estimated using standard interpolation error estimates
and the stability bound.

\paragraph{\bf Estimate of $\bfI\bfI$} Using Cauchy-Schwarz and the
interpolation estimate (\ref{eq:interpest-anis}) we obtain
\begin{equation}
  |II|\lesssim \beta h^{-1} \|u - \pi_h u\|_{\partial \Omega} \| \psi
  \|_{\partial \Omega} \lesssim h \| \psi \|_{\partial \Omega}
\end{equation}
which concludes the proof.
\end{proof}

Following the same line of reasoning, we now state and prove the
corresponding $L^2$-error estimate when the boundary flux is
approximated by the Lagrange multiplier,
cf.~\eqref{eq:flux-definition-lm-case}.
Referring to the variational problem~\eqref{eq:lm-weak-form},
the stabilization form is supposed
the following localized version of the continuity
condition~\eqref{eq:continuity-lm}
\begin{align}
  | c_h(u,\lambda;v,\mu) |
  &\lesssim 
  \left(
    \| \nabla u \|_{\Omega_{\deltahone}}
    + \| h^{1/2} n \cdot \nabla u \|_{\partial \Omega}
    + \| h^{-1/2}  u \|_{\partial \Omega}
    + \| h^{1/2} \lambda \|_{\partial \Omega}
  \right)
  \nonumber
  \\
  &\quad \cdot
  \left(
    \| \nabla v \|_{\Omega_{\deltahone}}
    + \| h^{1/2} n \cdot \nabla v \|_{\partial \Omega}
    + \| h^{-1/2} v \|_{\partial \Omega}
    + \| h^{1/2} \mu \|_{\partial \Omega}
  \right)
  \label{eq:stab-form-local-cont}
\end{align}
This assumptions is trivially satisfies by the stabilization
form~\eqref{eq:stabilization-form-stenberg} and merely quantifies that
the region of influence of the stabilization is located
on or close to the boundary.

\begin{theorem} Let $\Sigma_n$ be the discrete boundary flux
  defined~\eqref{eq:flux-definition-lm-case} and assume the $u$ satisfies
  the assumption of Proposition~\ref{prop:interpest-anis} and 
  that $\lambda \in H^1(\pO)$.
  Then the following error estimate holds
\begin{equation}
  \label{eq:flux-estimate-lm}
\|\sigma_n - \Sigma_n\|_{\partial \Omega} \lesssim |\ln h | h
\end{equation}
\end{theorem}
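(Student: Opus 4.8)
The plan is to follow the same template as the Nitsche proof, now using the error representation formula of Lemma~\ref{lem:errorrep-lm} together with the stability estimate~\eqref{eq:stabfinal-lagrange} for the discrete dual pair $(\phi_h,\theta_h)$ and the anisotropic interpolation estimate of Proposition~\ref{prop:interpest-anis}. First I would expand
\[
A_h(\pi_h u - u, \pi_h\lambda - \lambda; \phi_h,\theta_h)
= a(\pi_h u - u,\phi_h) + b(\theta_h, \pi_h u - u) + b(\pi_h\lambda - \lambda, \phi_h) - c_h(\pi_h u - u, \pi_h\lambda - \lambda; \phi_h, \theta_h)
\]
and bound each piece separately against $\|\psi\|_{\partial\Omega}$. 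The bilinear form $b(\theta_h,\pi_h u - u) = (\theta_h, \pi_h u - u)_{\partial\Omega}$ is handled by Cauchy--Schwarz, pairing $\|h\theta_h\|_{\partial\Omega} \lesssim \|\psi\|_{\partial\Omega}$ from~\eqref{eq:stabfinal-lagrange} with $\|h^{-1}(u - \pi_h u)\|_{\partial\Omega} \lesssim h$ from~\eqref{eq:interpest-anis}. Similarly $b(\pi_h\lambda - \lambda, \phi_h) = (\pi_h\lambda - \lambda, \phi_h)_{\partial\Omega}$ is controlled using $\|\phi_h\|_{\partial\Omega} \lesssim \|\psi\|_{\partial\Omega}$ (the $\delta = 0$ case of the $\sup$ term) and the $H^1(\partial\Omega)$-interpolation bound $\|\lambda - \pi_h\lambda\|_{\partial\Omega} \lesssim h\|\lambda\|_{1,\partial\Omega}$. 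The stabilization term $c_h$ is estimated by the localized continuity bound~\eqref{eq:stab-form-local-cont}: each factor involving $(\phi_h,\theta_h)$ is absorbed by~\eqref{eq:stabfinal-lagrange} (note $\|\nabla\phi_h\|_{\Omega_{\deltahone}}$ is dominated by $\|\nabla\phi_h\|_{\rho_{\deltahone},\Omega}^{1/2}$ up to a distance-weight which is bounded below away from $\partial\Omega$, and $\|h^{1/2}n\cdot\nabla\phi_h\|_{\partial\Omega}$ comes from $h\|\nabla\phi_h\|_\Omega^2$ via an inverse estimate), while each factor involving $(\pi_h u - u, \pi_h\lambda - \lambda)$ is $O(h)$ by the interpolation estimates and consistency.

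The genuinely delicate term is the interior gradient term $a(\pi_h u - u, \phi_h) = (\nabla(u - \pi_h u), \nabla\phi_h)_\Omega$, which is treated exactly as in the Nitsche proof: split $\Omega$ into the $O(h)$-band $\Omega\setminus\Omega_{\delta_h}$, the annulus $\Omega_{\delta_h}\setminus\Omega_{\delta_0}$, and the far field $\Omega_{\delta_0}$. On the band, Cauchy--Schwarz in the tangential variable and H\"older in the normal variable give a factor $\delta_h^{1/2}\|\nabla\phi_h\|_{\Omega\setminus\Omega_{\delta_h}}$, and since $\delta_h \lesssim h$ the stability estimate $h\|\nabla\phi_h\|_\Omega^2 \lesssim \|\psi\|_{\partial\Omega}^2$ from~\eqref{eq:stabfinal-lagrange} together with the anisotropic interpolation bound yields $O(h)\|\psi\|_{\partial\Omega}$. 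On the annulus the weight $\rho_{\deltahone}(x) = s - \deltahone$ on $\partial\Omega_s$ is used: writing $\|\nabla\phi_h\|_{\partial\Omega_s}^2 = (s-\deltahone)^{-1}\cdot(s-\deltahone)\|\nabla\phi_h\|_{\partial\Omega_s}^2$ and integrating, the factor $\big(\int_{\delta_h}^{\delta_0}(s-\deltahone)^{-1}\ds\big)^{1/2} \sim |\ln h|^{1/2}$ produces the logarithm, while $\int_{\delta_h}^{\delta_0}(s-\deltahone)\|\nabla\phi_h\|_{\partial\Omega_s}^2\ds \lesssim \|\nabla\phi_h\|_{\rho_{\deltahone},\Omega}^2 \lesssim \|\psi\|_{\partial\Omega}^2$. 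On the far field a plain Cauchy--Schwarz with the $\delta_0$-weighted gradient control suffices, and standard $H^2$-interpolation gives an $O(h)$ contribution. The logarithm in the final estimate~\eqref{eq:flux-estimate-lm} thus originates solely from the annular region.

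Finally I would assemble these bounds with the remaining term $m_{\psi,h}(\lambda - \pi_h\lambda) = (\psi, \lambda - \pi_h\lambda)_{\partial\Omega}$, which is $\lesssim \|\psi\|_{\partial\Omega}\,\|\lambda - \pi_h\lambda\|_{\partial\Omega} \lesssim h\|\lambda\|_{1,\partial\Omega}\|\psi\|_{\partial\Omega}$ by Cauchy--Schwarz and~\eqref{eq:interpest1}. Dividing through by $\|\psi\|_{\partial\Omega}$ and taking the supremum in Lemma~\ref{lem:errorrep-lm} gives $\|\lambda - \lambda_h\|_{\partial\Omega} = \|\sigma_n - \Sigma_n\|_{\partial\Omega} \lesssim |\ln h|\,h$. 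The main obstacle is the bookkeeping for the stabilization term $c_h$: one must check that the localized continuity assumption~\eqref{eq:stab-form-local-cont} really does pair with the pieces of the discrete dual stability estimate~\eqref{eq:stabfinal-lagrange} — in particular that the $\|h^{1/2}n\cdot\nabla\phi_h\|_{\partial\Omega}$ factor is controlled (via the inverse estimate~\eqref{eq:inverse-estimate-for-facets} and $h\|\nabla\phi_h\|_\Omega^2$) and that the $\|\nabla\phi_h\|_{\Omega_{\deltahone}}$ factor is dominated by the weighted norm, since the weight $\rho_{\deltahone}$ is comparable to a positive constant on $\Omega_{\deltahone}$ only away from where it vanishes; here one invokes $\delta_h\lesssim h$ and $h\|\nabla\phi_h\|^2_\Omega\lesssim\|\psi\|^2_{\partial\Omega}$ on the thin transition layer and the weighted estimate elsewhere.
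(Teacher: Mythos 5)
Your proposal is correct and follows essentially the same route as the paper: the error representation formula of Lemma~\ref{lem:errorrep-lm}, term-by-term expansion of $A_h$, reuse of the Nitsche-case band/annulus/far-field splitting for the interior gradient term (which is where the $|\ln h|$ factor arises), Cauchy--Schwarz with the discrete dual stability and anisotropic interpolation for the boundary terms, and the localized continuity assumption~\eqref{eq:stab-form-local-cont} for the stabilization form. If anything, you supply more detail on bounding $c_h$ than the paper, which simply asserts that this term ``can be estimated similarly.''
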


\begin{proof}
Starting from the error representation
formula~\eqref{eq:errorrep-lm}, we need to estimate
\begin{equation*}
  I = \sup_{\psi \in L^2(\partial \Omega)\setminus 0}
  \dfrac{|A_h(\pi_h u -u, \pi_h \lambda - \lambda; \phi_h, \theta_h)
  |}
  {\| \psi \|_{\partial \Omega}},
  \quad
  II = \sup_{\psi \in L^2(\partial \Omega)\setminus 0}
  \dfrac{m_{\psi,h}(\lambda - \pi_h \lambda) }
  {\| \psi \|_{\partial \Omega}},
\end{equation*}
By definition, 
\begin{align*}
A_h(\pi_h u -u, \pi_h \lambda - \lambda; \phi_h, \theta_h)
&= (\nabla (\pi_h u - u), v)_{\Omega} 
+ (\pi_h \lambda - \lambda; \phi_h)_{\partial \Omega}
+ (\theta_h,  \pi_h u - u)_{\partial \Omega}
\\
&\quad - c_h(\pi_h \lambda - \lambda, \pi_h u - u ; \phi_h, \theta_h)
\end{align*}
Since the estimate for first term has already been derived in the previous
proof, it remains to bound the contribution from the
boundary terms and the stabilization form. 
An application of
the interpolation estimates and the discrete energy
stability~\eqref{eq:stabdiscretelagrange} yields
\begin{align*}
(\pi_h \lambda - \lambda; \phi_h)_{\partial \Omega}
&\lesssim h 
\| \lambda \|_{1, \partial \Omega} \| \psi \|_{\partial \Omega}
\\
(\theta_h, \pi_h u - u)_{\partial \Omega}
&\lesssim
\| h \theta \|_{\pO} \| h^{-1} \pi_h u - u \|_{\pO}
\lesssim  h \| \psi \|_{\pO} 
\end{align*}
Because of assumption~\eqref{eq:stab-form-local-cont},
the contribution from the stabilization form can be estimated similarly.
Finally, thanks to an interpolation estimate, term $II$ trivially satisfies
$| II | \lesssim h \|\lambda\|_{\pO}$.
\end{proof}

\section{Numerical Results}
\label{sec:numerical-results}
We consider the elliptic model
problem~\eqref{eq:model-problem-strong-pde}--\eqref{eq:model-problem-strong-bc}
on the domain $\Omega=[0,1]\times[0,1] \subset \R^2$.
To examine the convergence rate of the normal flux approximations,
we employ the method of manufactured solution and choose
\begin{align*}
  u(x,y) = \cos(2\pi x)\cos(2\pi y) + \sin(2 \pi x) \sin(2\pi y)
\end{align*}
as a reference solution, $g = u|_{\partial \Omega}$ and $f = -\Delta
u$ as the corresponding boundary data and source function,
respectively.

As discretization schemes, we pick Nitsche's
method~\eqref{eq:nitsche-form} and a stabilized Lagrange multiplier
method~\eqref{eq:lm-weak-form} with the stabilization form given by
\eqref{eq:stabilization-form-stenberg}.
For the stabilization parameters 
we take $\alpha = \beta = 10$. 
The approximations for the boundary flux are then computed on a sequences of uniform meshes
$\{\mcT_h\}_h$ with mesh sizes $ h \approx \tfrac{1}{4\sqrt{2}^{k}}$ for
$k=0,\dots,15$.
The numerical results are depicted in 
Figure~\ref{fig:fluxes-l2-convergence}.
In the pre-asymptotic regime ranging from $h\approx 0.35$ to $h\approx
0.1$, the convergence rate of both methods deviates significantly from
the optimal slope $1.0$. Consequently, the fitted slopes
indicate a slightly sub-optimal convergence rate for the Nitsche flux,
while the convergence rate for Lagrange multiplier method is higher then the
theoretical prediction.
If we discard the pre-asymptotic regime
as shown in the right plot of Figure~\ref{fig:fluxes-l2-convergence},
the approximation error $\|
\sigma_n - \Sigma_h\|_{\partial \Omega}$ exhibits  optimal convergence
rate for both methods and corroborates the theoretical findings of our
work.
\begin{figure}[htb]
  \centering
  \includegraphics[width=0.60\textwidth]{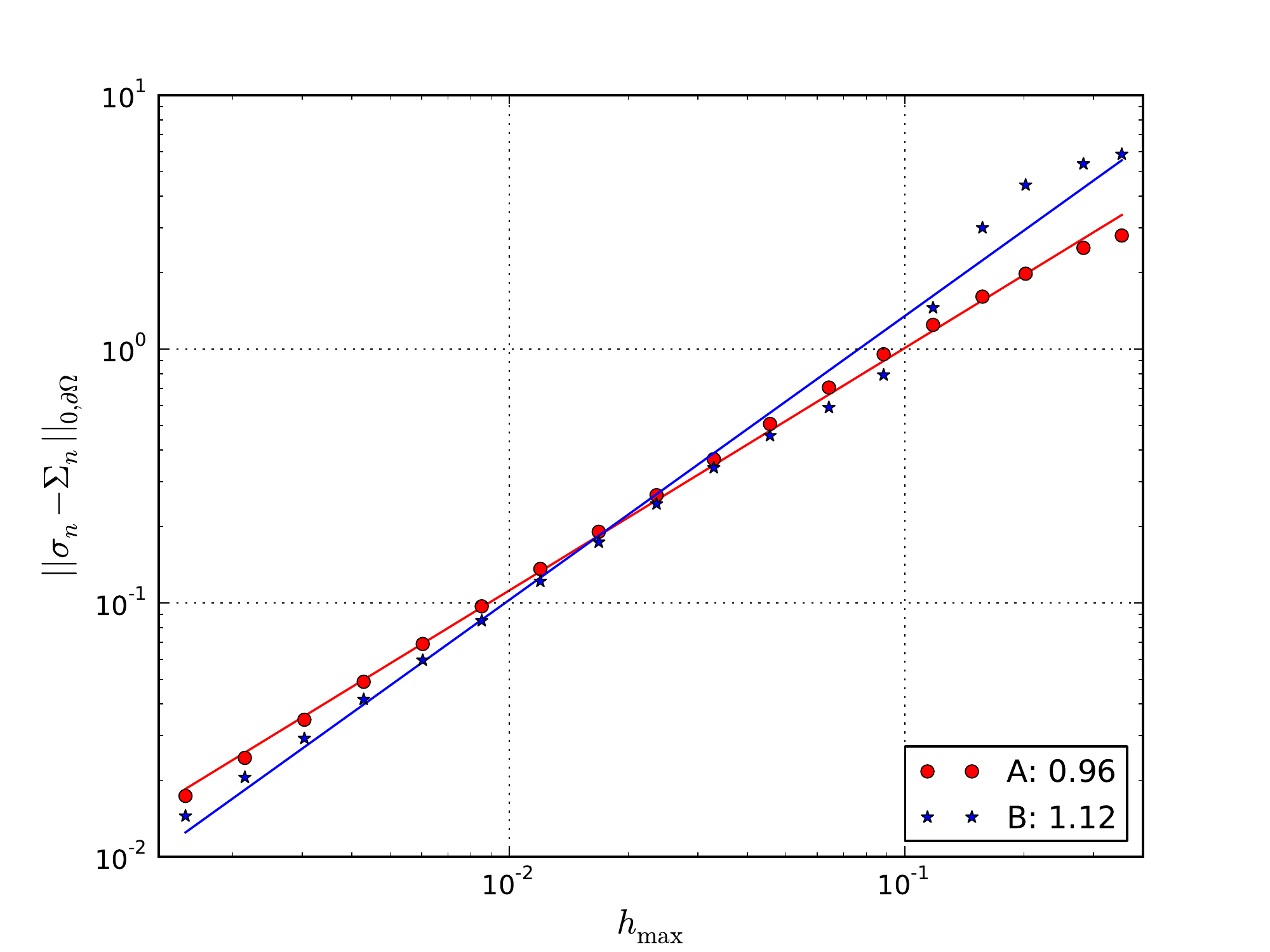}
  \includegraphics[width=0.60\textwidth]{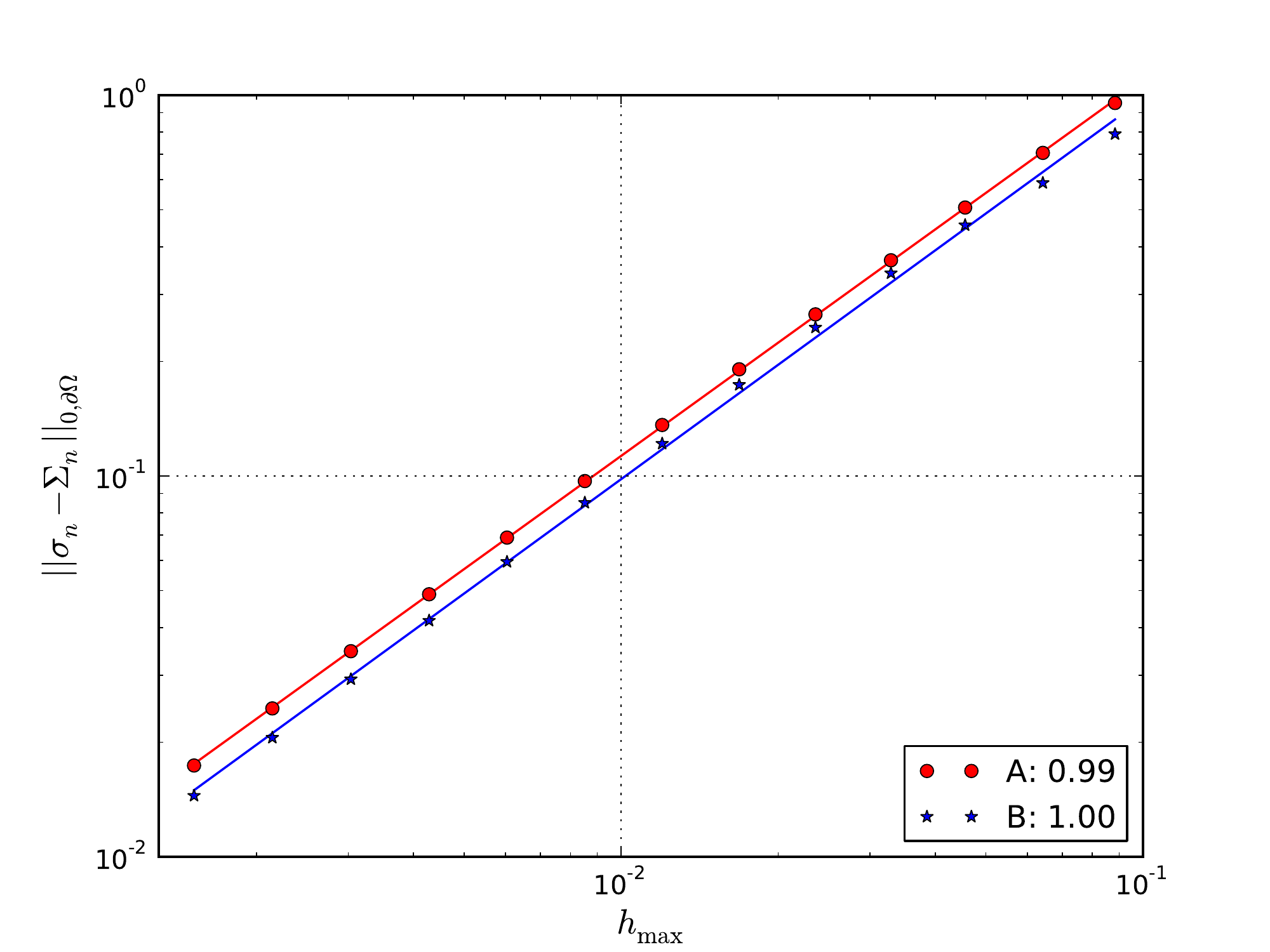}
  \caption{$L^2(\partial \Omega)$ convergence study for various flux
    computations.  (A) Nitsche flux for $\mathrm{CG}(1)$ elements. (B) Lagrange multiplier 
    computed with the stabilized method by Barbosa and Hughes based on
    a $\mathrm{CG}(1) \times \mathrm{DG}(0)$ discretization.
    The legend gives the fitted slope for each approximation error.
    (Left) Approximation error for the entire mesh sequence revealing
    different behavior in the pre-asymptotic regime.
    (Right) Asymptotic regime. Starting from $h\approx 0.1$
    both methods give optimal first order convergence.
  }
  \label{fig:fluxes-l2-convergence}
\end{figure}
\clearpage

\section*{Acknowledgments}
This work is supported by a Center of Excellence grant from the Research
Council of Norway to the Center for Biomedical Computing at Simula Research
Laboratory.

\bibliographystyle{plainnat}
\bibliography{bibliography}

\end{document}